\patchcmd{\thebibliography}{\section*}{\section}{}{}
\newcommand{\1}{\mathbf 1}
\newcommand{\F}{\mathscr F}
\newcommand{\vep}{{\varepsilon}}
\renewcommand{\P}{{\mathbb P}}
\newcommand{\E}{{\mathbb E}}
\newcommand{\Z}{{\mathbb Z}}
\newcommand{\R}{{\mathbb R}}
\newcommand{\Id}{{\rm Id}}
\renewenvironment{proof}[1][\proofname]{\noindent {\bfseries #1.}\;}{\hfill\ensuremath{\blacksquare}\\}
\newtheoremstyle{slantthm}{10pt}{10pt}{\slshape}{}{\bfseries}{}{.5em}{\thmname{#1}\thmnumber{ #2}\thmnote{ (#3)}.}
\newtheoremstyle{slantrmk}{10pt}{10pt}{\rmfamily}{}{\bfseries}{}{.5em}{\thmname{#1}\thmnumber{ #2}\thmnote{ (#3)}.}
\begin{document}
\theoremstyle{slantthm}
\newtheorem{thm}{Theorem}[section]
\newtheorem{prop}[thm]{Proposition}
\newtheorem{lem}[thm]{Lemma}
\newtheorem{cor}[thm]{Corollary}
\newtheorem{defi}[thm]{Definition}
\newtheorem{disc}[thm]{Discussion}
\newtheorem*{nota}{Notation}
\newtheorem{conj}[thm]{Conjecture}

\numberwithin{equation}{section}

\theoremstyle{slantrmk}
\newtheorem{ass}[thm]{Assumption}
\newtheorem{rmk}[thm]{Remark}
\newtheorem{example}[thm]{Example}
\newtheorem{que}[thm]{Question}
\newtheorem{quest}[thm]{Quest}
\newtheorem{prob}[thm]{Problem}

\setcounter{footnote}{1}

\title{\bf Coexistence and duality in competing species models\footnote{This article was written for the learning session `Competing species models' 
organized by the authors and held in August 2017 at the Institute for Mathematical Sciences at the National University of Singapore. }}
\author{Yu-Ting Chen\footnote{Department of Mathematics, University of Tennessee, Knoxville, TN, United States of America.} and Matthias Hammer\footnote{Institut f\"ur Mathematik, Technische Universit\"at Berlin, Stra{\ss}e des 17. Juni 136, 10623 Berlin, Germany.}}
\date{\today}

\maketitle
\abstract{We discuss some stochastic spatial generalizations of the Lotka--Volterra model for competing species. The generalizations take the forms of spin systems on general discrete sets and interacting diffusions on integer lattices. 
Methods for proving coexistence in these generalizations and some related open questions are discussed. We use duality as the central point of view. It relates coexistence of the models to survival of their dual processes. \\
 
\noindent\emph{Keywords:} Interacting particle system, interacting diffusion, duality.
\smallskip 

\noindent\emph{Mathematics Subject Classification (2000):} 
82C22, 60J10, 60J60 
}

\tableofcontents

\section{Introduction}
Competition of species is a basic phenomenon studied in ecological dynamics. The mathematical analysis goes back to the foundational model by Lotka and Volterra in \cite{Lotka,Volterra}. 
Total sizes of two populations of different species, or types, are the main objects of study. The populations are assumed well-mixed, and their sizes are in the continuum. 
The dynamics for these total sizes follow a two-dimensional extension of the logistic differential equation.
Now restricted resources and competition among individuals in the populations suppress the exponential growths.  
See the monograph by Hofbauer and Sigmund~\cite{HS} for an excellent introduction to this classical model.

In the biological literature, choosing new ingredients to extend the Lotka--Volterra model  has been a subject of research during the last few decades.
One popular method is the use of stochasticity and spatial structure.
By stochasticity,  types of individuals in the whole population  are randomly determined. In the presence of spatial structure, it is no longer the case that everyone can interact with everyone else. Populations are arranged according to Euclidean spaces, integer lattices, or more general geometric structures. In particular, the  Euclidean space and integer lattice in two dimensions are biologically relevant. The corresponding tori are useful for mathematical analysis and computer experiments. 

Spatial structure and stochasticity can be combined to create new properties. 
Spatial structure naturally points to the use of partial differential equations, and in the presence of stochasticity, of stochastic partial differential equations. See \cite{HLBV,TM} for some examples. Besides, stochastic models on discrete spatial structures arise as popular candidates to incorporate both space and stochasticity. They can be connected to stochastic partial differential equations via scaling limits \cite{CP:Rescaled, CMP}, so that useful tools from stochastic analysis can be applied.  
On the other hand, these discrete models can circumvent the serious issue of well-posedness of stochastic partial differential equations beyond one spatial dimension.

This article is an introduction to stochastic spatial generalizations of the Lotka--Volterra model. For definiteness, our discussions are centered around two well-known generalizations, one due to Neuhauser and Pacala \cite{NP} and the other due to Blath, Etheridge and Meredith \cite{BEM}. We explain in Section~\ref{sec:LV} how these models are natural extensions of the Lotka--Volterra model. These extensions incorporate stochasticity in the forms of the voter model and the stepping stone model.

 The models in \cite{NP,BEM} can be briefly described as follows in terms of the spatial structures and the dynamics. 
 First, the model by Neuhauser and Pacala is a dynamical model (started at time $0$) for individuals in two populations of different types. These individuals 
occupy different sites of a discrete space, and so the model is at the microscopic level. (In terms of the terminology in \cite[Chapter~III]{Liggett}, the model is a spin system.)
In contrast, the model by Blath et al.~\cite{BEM} considers continuous population sizes as in the Lotka--Volterra model, but here the two populations are structured into `villages' occupying vertices of $\Z^d$. Given weights for the strengths of interactions among the villages, a countable system of interacting diffusion processes defines the dynamics of the sizes of the populations.

As in the Lotka--Volterra model, the major interest in these generalizations is centered around the question of characterizing large-time behavior. In these lecture notes, we explain the related properties with the use of duality. For the Neuhauser--Pacala model, our discussion considers the associated spin system $(\eta_t)$ in terms of its dual process $(\widehat{\eta}_t)$.  
These two processes are related by equations taking the following form for suitable bivariate dual functions $H$:
\begin{align}\label{analyticdual}
\E[H(\eta_t,\widehat{\eta}_0)]=\E[H(\eta_0,\widehat{\eta}_t)] ,
\end{align}
when $\eta_0$ and $\widehat{\eta}_0$ are both constant. 
Hence, the question of characterizing the large-time behavior of $(\eta_t)$ can be converted to the same question for $(\widehat{\eta}_t)$, and the hope is that the dual process is easier to study. 
In the case of \cite{NP}, duality can even take a finer form, namely Harris's graphical duality, which is sometimes also called pathwise duality. Here, $(\eta_t)$ and $(\widehat{\eta}_t)$ can be coupled on the same probability space. See \cite{Durrett_note,JK13,SS}. For the case of interacting diffusions in \cite{BEM}, we discuss large-time behavior of the model using duality in the form (\ref{analyticdual}). Finally, we refer the reader to the lecture notes in the present series by Sturm, Swart and V\"ollering~\cite{SSV} and to the papers by Jansen and Kurt~\cite{JK13, JK14} for further discussions of duality.

\paragraph{\bf Organization of this article.} This article is organized as follows. In Section~\ref{sec:LV}, we first explain some features of the Lotka--Volterra model that motivate the generalizations in \cite{NP,BEM}. 
Section~\ref{sec:spin} is devoted to a discussion of the (symmetric) Neuhauser--Pacala model. We first explain the duality from various points of view. Then we explain its usefulness for solving the stationarity of the model. 
Section~\ref{sec:interacting} continues the spirit of Section~\ref{sec:spin} for a discussion of the Blath--Etheridge--Meredith model.

\paragraph{\bf Acknowledgements.} Supports from the Institute for Mathematical Sciences at the National University of Singapore for both authors and a grant from the Simons Foundation for Y.-T. C. are gratefully acknowledged.

\section{Death-birth events in the Lotka--Volterra model }\label{sec:LV}
In this section, we discuss the Lotka--Volterra model and its connections with the stochastic spatial generalizations in the main papers under consideration \cite{NP,BEM}.

Recall that the competitive Lotka--Volterra ordinary differential equations for two populations of different types are based on the logistic equations. There are interactions within and between the two populations.
If we denote by $N_0=N_0(t)$ and $N_1=N_1(t)$ the total sizes of the populations of type $0$ and $1$, respectively, then  the system of dynamical equations for $N_0$ and $N_1$ is given by
\begin{align}
\left\{
\begin{array}{ll}
\dot{N_0}=&\!\!\!\!\displaystyle r_0N_0\left(1-\frac{N_0+\alpha_{01}N_1}{K_0}\right),\\
\vspace{-.3cm}\\
\dot{N_1}=&\!\!\!\! \displaystyle r_1N_1\left(1-\frac{N_1+\alpha_{10}N_0}{K_1}\right).
\end{array}
\right.\label{eq:LV0}
\end{align}
Here, $r_i$ is the  intrinsic growth rate of the population of type $i$, $K_i$ is the carrying capacity of the population of type $i$, and $\alpha_{ij}$ is called the strength of interspecific competition. These are given  positive constants.
Without the term $(N_0+\alpha_{01}N_1)/K_0$ to suppress the growth of $0$-individuals, they grow exponentially with rate $r_0$. A similar interpretation applies to the equation for $N_1$. 
By (\ref{eq:LV0}), the total population size $N_0+N_1$ does not stay constant.

Now we rewrite (\ref{eq:LV0}) in the following form (with obvious definitions of constants):
\begin{align}
\left\{
\begin{array}{ll}
\dot{N_0}=&\!\!\!\!\displaystyle 
N_0(a-bN_0-cN_1),\\
\vspace{-.3cm}\\
\dot{N_1}=&\!\!\!\! \displaystyle 
N_1(a'-b'N_1-c'N_0),
\end{array}
\right.\label{eq:LV}
\end{align}
and then turn to the densities of $0$-individuals and $1$-individuals defined by
\[
p_0\stackrel{\rm def}{=}\frac{N_0}{N_0+N_1}\quad\mbox{and}\quad p_1\stackrel{\rm def}{=}\frac{N_1}{N_0+N_1}.
\] 
By (\ref{eq:LV}), we get
\begin{align}
\dot{p_0}
=&\,-\frac{N_0N_1}{(N_0+N_1)^2}\left(a'+bN_0+cN_1\right)+\frac{N_0N_1}{(N_0+N_1)^2}\left(a+b'N_1+c'N_0\right).\label{p0dot}
\end{align}

To simplify the last equation, we assume
\begin{align}\label{a=a'}a=a',
\end{align}
that is, intrinsic growth rates $r_0$ and $r_1$ are  identical.
Then (\ref{p0dot}) reduces to the following equation:
\begin{align}\label{p:de}
\dot{p_0}=-\frac{N_0N_1}{(N_0+N_1)^2}\left(bN_0+cN_1\right)+\frac{N_0N_1}{(N_0+N_1)^2}\left(b'N_1+c'N_0\right).
\end{align}
Now, we change time by using the functions
\begin{align}\label{pj:new}
t\longmapsto  p_j\left(\int_0^t \frac{1}{(\lambda p_1(s)+p_0(s)) b'(N_0+N_1)}ds\right),\quad j=0,1.
\end{align}
Then (\ref{p:de}) can be transformed into a closed equation, since this time-change amounts to the effect of  dividing both sides of (\ref{p:de}) by 
\[
(\lambda p_1+p_0) b'(N_0+N_1). 
\]
To simplify notation, for $j=0,1$, we still denote the corresponding new function in (\ref{pj:new}) by $p_j$. Then (\ref{p:de}) implies
\begin{align}\label{MF}
\dot{p_0}=-p_0\frac{\lambda p_1}{\lambda p_1+p_0}(p_0+\alpha_{01}p_1)+p_1\frac{p_0}{\lambda p_1+p_0}(p_1+\alpha_{10}p_0).
\end{align}
The coefficients of (\ref{p:de}) follow since, under the present assumption in (\ref{a=a'}), the constants $a,b,c,a',b',c'$ in (\ref{eq:LV}) satisfy the following equations: 
\begin{align}\label{def:lambda}
\lambda\stackrel{\rm def}{=}b/b'=K_1/K_0, \quad c/b=\alpha_{01}\quad\mbox{ and }\quad c'/b'=\alpha_{10}.
\end{align}

Equation (\ref{MF}) suggests natural interpretations similar to the Moran process in population genetics. The first term in (\ref{MF}), that is
\begin{align}\label{db}
-p_0\frac{\lambda p_1}{\lambda p_1+p_0}(p_0+\alpha_{01}p_1)=-p_0\times(p_0+\alpha_{01}p_1)\times \frac{\lambda p_1}{\lambda p_1+p_0},
\end{align}
has the interpretation as
\begin{align}
\begin{split}
-\mbox{(probability to find a $0$-individual)}
& \times  \mbox{(death rate of a $0$-individual)}\\
& \times\mbox{(birth probability of a $1$-individual)}.
\end{split}
\end{align}
Here, taking
\[
p_0+\alpha_{01}p_1
\]
as a death rate is consistent with the dynamical equation of $N_0$ in (\ref{eq:LV0}) now that $r_0=r_1$ by our assumption in (\ref{a=a'}). Indeed, $p_0+\alpha_{01}p_1$ is the same as $N_0+\alpha_{01}N_1$ up to the (time-dependent) total population size $N_0+N_1$. 
Also, by the definition of $\lambda$ in (\ref{def:lambda}),
\[
\frac{\lambda p_1}{\lambda p_1+p_0}=\frac{K_1N_1}{K_1N_1+K_0N_0}.
\]
This ratio can be taken as a birth probability of $1$-individuals. The higher the carrying capacity $K_1$ is, the more likely $1$-individuals can replicate. These altogether are responsible for the decrease of $p_0$, which explains the minus sign in (\ref{db}).
The second term in (\ref{MF}) can be interpreted similarly with the roles of $1$ and $0$ exchanged. But now this term is responsible for the increase of $0$-individuals in the population. 

Equilibrium in 
the dynamical equation for $p_0$ in (\ref{MF}) is very easy to obtain. Since $p_0+p_1=1$, we get
\begin{align}\label{de:p0}
\begin{split}
\dot{p_0}&=\frac{p_0(1-p_0)}{\lambda (1-p_0)+p_0}\big\{(1-\lambda \alpha_{01})-p_0\big[(1-\lambda \alpha_{01})+(\lambda-\alpha_{10})\big]\big\}\\
&=\frac{F(p_0)}{\lambda(1-p_0)+p_0},
\end{split}
\end{align}
where $F$ is a polynomial in $p_0$. 
Setting the right-hand side to zero shows that the foregoing equation has a unique stable equilibrium given by
\begin{align}\label{p0*}
p_0^*=\frac{(1-\lambda \alpha_{01})}{(1-\lambda \alpha_{01})+(\lambda-\alpha_{10})}
\end{align}
if $0\leq \alpha_{10}<\lambda$ and $0\leq \alpha_{01}<1/\lambda$, where the restrictions on $\alpha_{10}$ and $\alpha_{01}$ ensure that $p_0^*$ falls in $(0,1)$. The stability of $p_0^*$ is plain from the graph of $F$ given in Figure~\ref{Fig1}.

\begin{figure}[t]
\centering
\vspace{-2cm}
 \hspace{2cm} 
\setlength{\unitlength}{0.5cm}
\begin{picture}(5,10)(6,-6)
\put(3,-3){\vector(1,0){7}}
\put(4,-4){\vector(0,1){4}}
\put(10,-4){$p$}
\put(2,-1){$F(p)$}
\thicklines
\put(8,-3){\vector(-1,0){2}}
\put(4,-3){\vector(1,0){2}}
\qbezier(4,-3)(5,-1)(6,-3)
\qbezier(6,-3)(7,-5)(8,-3)
\put(4,-4){$0$}
\put(5.5,-4){$p_0^*$}
\put(8,-4){$1$}
\end{picture}
\vspace{-.5cm}
  \caption{}\label{Fig1}
\end{figure}

A stochastic spatial generalization of (\ref{de:p0}) to spin systems is introduced by Neuhauser and Pacala in \cite{NP}. Put in a general framework, the model can be described in the following way. 
Let $q$ be an irreducible  transition probability kernel on a nonempty set $E$ and suppose that $q$ has a zero trace: 
\begin{align*}
\sum_{x\in E}q(x,x)=0. 
\end{align*}
For  $\sigma\in \{0,1\}$, $x\in E$ and $\eta\in \{0,1\}^E$, we define the local frequencies of $\sigma$'s by
\begin{align*}
f_\sigma (x,\eta)=\sum_{y\in E}q(x,y)\1_{\{\sigma\}}\big(\eta(y)\big).
\end{align*}
The canonical example here is that $q$ is given by the transition probability kernel of simple random walk on a connected graph. 
In this case, $f_\sigma(x,\eta)$ reduces to the usual frequency of $\sigma$'s in the neighborhood of $x$: 
\[
f_\sigma(x,\eta)=\frac{1}{\deg(x)}\sum_{y:y\sim x}\1_{\{\sigma\}}\big(\eta(y)\big),
\]
where $\deg(x)$ denotes the number of neighbors of $x$ and $y\sim x$ means that $y$ is a neighbor of $x$.
The use of these kernels $(E,q)$ is meant to allow perturbations of graph structures in the usual $\Z^2$ or two-dimensional discrete tori. 
Hence, we can view the spin system to be defined below from a more general perspective.

Now the flip rates of the sought-after generalization are set to be 
\begin{align}
\begin{split}\label{def:flip rate}
&0\to 1\mbox{ with rate }(f_0+\alpha_{01}f_1)
\left(\frac{\lambda f_1}{\lambda f_1+f_0}\right),\\
&1\to 0\mbox{ with rate }(f_1+\alpha_{10}f_0) \left(\frac{f_0}{\lambda f_1+f_0}\right).
\end{split}
\end{align} 
In this spin system,
\[
f_0+\alpha_{01}f_1\quad\mbox{ and }\quad f_1+\alpha_{10}f_0
\]
are the death rates and
\[
\frac{\lambda f_1}{\lambda f_1+f_0}\quad\mbox{and}\quad \frac{f_0}{\lambda f_1+f_0}
\] 
are the birth probabilities, as in (\ref{db}). Hence, in the sense of flip rates in \cite[page 122--123]{Liggett}, the $\{0,1\}^E$-valued Markov process $(\eta_t)$ under consideration is characterized by the following limits:
\begin{align}
\begin{split}\label{def:flip}
&\P(\eta_t(x)=1|\eta_0(x)=0)=[f_0(x,\eta_0)+\alpha_{01}f_1(x,\eta_0)]\left(\frac{\lambda f_1(x,\eta_0)}{\lambda f_1(x,\eta_0)+f_0(x,\eta_0)}\right)+o(t),\quad t\searrow 0+,\\
&\P(\eta_t(x)=0|\eta_0(x)=1)=[f_1(x,\eta_0)+\alpha_{10}f_0(x,\eta_0)]\left(\frac{ f_0(x,\eta_0)}{\lambda f_1(x,\eta_0)+f_0(x,\eta_0)}\right)+o(t),\quad t\searrow 0+,\\
&\P\big(\eta_t(x)\neq \eta_0(x),\eta_t(y)\neq \eta_0(y)\big)=o(t),\quad t\searrow 0+,\quad \forall\;x\neq y.
\end{split}
\end{align}

The flip rates  introduced in (\ref{def:flip rate}) are applicable for general $(E,q)$'s defined above. In particular, it can be shown by limit theorems of semimartingales 
(cf. \cite{JS})
that the density of $1$'s in this particle system on a complete graph over $N$ vertices converges to the solution of (\ref{MF})  as $N\to\infty$ in the space of c\`adl\`ag functions equipped with Skorokhod's $J_1$-topology.

Our discussions in the rest of this paper for this model  will be on the construction of this spin system in the symmetric case with $\alpha_{01}=\alpha_{10}$ and $\lambda=1$
 and on the analysis of its equilibrium. Note that if $\alpha_{01}=\alpha_{10}=1$ and $\lambda=1$, then the model reduces to the voter model and is henceforth excluded from the discussion below.

A model introduced by Blath, Etheridge and Meredith~\cite{BEM} generalizes the Lotka--Volterra differential equations in (\ref{eq:LV0}) from a  point of view very similar to those 
in the biology papers
by Bolker and Pacala~\cite{BP} and Murrell and Law \cite{ML}. Here, we only consider the case that the underlying spatial structure is  an integer lattice $\Z^d$ for some $d\geq 1$. Population sizes at points in $\Z^d$ are in the continuum and subject to the new features of migration and stochastic growth defined by branching noises. More precisely, $(\eta_t)=(\eta_t(x);x\in \Z^d)$ and 
$(\eta'_t)=(\eta'_t(x);x\in \Z^d)$ model population sizes of $0$-individuals and $1$-individuals at all sites $x\in \Z^d$, respectively, are given by the following system of stochastic differential equations: for all $x\in \Z^d$,
\begin{align}
\begin{split}\label{sde:xi}
d\eta_t(x)=&\sum_{y\in \Z^d}m_{xy}\big(\eta_t(y)-\eta_t(x)\big)dt\\
&+\eta_t(x)\left(\alpha-\sum_{y\in \Z^d}\beta_{xy}\eta_t(y)-\sum_{y\in \Z^d}\gamma_{xy}\eta'_t(y)\right)dt+\sqrt{\sigma \eta_t(x)}dB_t(x),
\end{split}\\
\begin{split}\label{sde:xi'}
d\eta'_t(x)=&\sum_{y\in \Z^d}m'_{xy}\big(\eta'_t(y)-\eta'_t(x)\big)dt\\
&+\eta'_t(x)\left(\alpha'-\sum_{y\in \Z^d}\beta'_{xy}\eta'_t(y)-\sum_{y\in \Z^d}\gamma'_{xy}\eta_t(y)\right)dt+\sqrt{\sigma \eta'_t(x)}dB'_t(x),
\end{split}
\end{align}
where $\{B(x),B'(x);x\in \Z^d\}$ are i.i.d. one-dimensional standard Brownian motions.

The equations in (\ref{sde:xi}) and (\ref{sde:xi'}) now model spatial effects in a different way. For example, the sum 
\[
\sum_{y\in \Z^d}m_{xy}\big(\eta_t(y)-\eta_t(x)\big)dt
\]
models migration of individuals by a nonnegative matrix $m$ since 
\[
\mathsf Lf(x)= \sum_{y\in \Z^d}m_{xy}[f(y)-f(x)]
\]
is the generator of a Markov chain  moving along sites of $\Z^d$ subject to the $q$-matrix $\{m_{xy};x\neq y\}$. 
Also, the term
\[
\eta_t(x)\left(\alpha-\sum_{y\in \Z^d}\beta_{xy}\eta_t(y)-\sum_{y\in \Z^d}\gamma_{xy}\eta'_t(y)\right)dt
\]
has a natural correspondence to the logistic differential equations in (\ref{eq:LV0}). It is assumed in addition that these matrices $m,m',\beta,\beta',\gamma,\gamma'$ 
are `homogeneous' and
have finite ranges in the sense that, for example, $m_{xy}$ depends only on $\|x-y\|_\infty$ and
 $m_{xy}=0$ for all $\|x-y\|_\infty\geq L$ for some $L>0$ independent of $x,y$.  
 
The objects corresponding to (\ref{MF}) are the density processes 
\[
p_t(x)\stackrel{\rm def}{=}\frac{\eta_t(x)}{\eta_t(x)+\eta'_t(x)}
\]
of $0$-individuals at all sites $x$. The derivation in (\ref{p0dot}) can be generalized by It\^{o}'s formula
and formally conditioning on $\eta_t(x)+\eta'_t(x)\equiv N$ for all $x$. 
Then, assuming also that $m_{xy}=m'_{xy}$ and that $\beta_{xy},\beta'_{xy},\gamma_{xy},\gamma'_{xy}$ are zero for $x\neq y$ (purely local interactions), one is led to the following system:
\begin{align}\label{p:SDE}
\begin{split}
dp_t(x)&=\sum_{y\in \Z^d}m_{xy}\big(p_t(y)-p_t(x)\big)dt + s p_t(x)\big(1-p_t(x)\big)\big(1-\mu p_t(x)\big)dt\\
&\quad + \sqrt{N^{-1}p_t(x)\big(1-p_t(x)\big)}dW_t(x),\quad x\in\mathbb Z^d,
\end{split}
\end{align}
where $\{W(x); x\in\Z^d\}$ is a system of independent standard Brownian motions, and $s$ and $\mu$ are real parameters which can be expressed explicitly in terms of
the parameters of $\{\eta_t\}$ and $\{\eta'_t\}$.
We refer the reader to \cite[pages~1482--1483]{BEM} for details of the derivation. 
In particular, if we consider the symmetric case where $\eta$ and $\eta'$ follow the same parameters, then $\mu=2$ and $s = \beta_{xx} -\gamma_{xx}$.
Note that existence and uniqueness (in the strong sense) of a $[0,1]^{\Z^d}$-valued solution
of the system of SDEs in (\ref{p:SDE}) can be obtained independently from classical results for infinite-dimensional SDEs due to Shiga and Shimizu~\cite[Theorem~2.1]{SS_1980}. 
In Section~\ref{sec:interacting}, the discussion for these interacting diffusions in (\ref{p:SDE}) is independent of the system defined by (\ref{sde:xi}), and so we use general parameters $s$ and $\mu$ from now on.

\section{Spin systems on discrete sets}\label{sec:spin}
Throughout this section, we focus on the symmetric Neuhauser--Pacala model, that is $\lambda=1$ and 
\begin{linenomath*}\begin{align}\label{def:alpha}
\alpha_{01}=\alpha_{10}=\alpha\in [0,1).
\end{align}\end{linenomath*}
In this case, there is no bias in birth rates and death rates induced by these parameters, and  the flip rates defined in (\ref{def:flip rate}) can be simplified as follows:
\begin{linenomath*}\begin{align}\label{def:fr1}
\left\{
\begin{array}{ll}
0\to 1&\mbox{ with rate }(f_0+\alpha f_1)f_1=(1-\alpha)f_0f_1+\alpha f_1,\\
\vspace{-.4cm}\\
1\to 0&\mbox{ with rate }(f_1+\alpha f_0)f_0=(1-\alpha)f_1f_0+\alpha f_0.
\end{array}
\right.
\end{align}\end{linenomath*}
The assumptions that $\lambda=1$ and $\alpha_{01}=\alpha_{10}$ mean that the populations of types $0$ and $1$ behave in the same way.

Below we first introduce in Section~\ref{sec:NP_duality} various constructions of this symmetric model. The constructions will progressively lead us to the duality for the model, which is called {\bf parity duality} in this paper for reasons that shall become self-evident. 
In Section~\ref{sec:NP_inv}, we discuss the related basic results and open questions.

\subsection{Constructions and parity duality}\label{sec:NP_duality}
In the following, we first view the symmetric Neuhauser--Pacala model in terms of two sets of flip rates. These sets of flip rates define the symmetric model with a detailed coupling by independent Poisson processes. On a finite set, the construction leads to a matrix representation of the model and easily induces the parity duality. We will explain these steps in detail. By the end of this subsection, we give a sketch of how generalizations of the parity duality  can be obtained on infinite sets.   

First,  the flip rates defined above in (\ref{def:fr1}) are  decomposed into  two fundamental mechanisms given by
\begin{linenomath*}\begin{align}\label{def:fr11}
\begin{split}
&\mbox{pairwise annihilation:}\left\{
\begin{array}{ll}
0\to 1&\mbox{ with rate }(1-\alpha)f_0f_1,\\
\vspace{-.4cm}\\
1\to 0&\mbox{ with rate }(1-\alpha)f_1f_0,
\end{array}
\right.\\
&\hspace{2.46cm}\mbox{voting:}\left\{
\begin{array}{ll}
0\to 1&\mbox{ with rate }\alpha f_1,\\
\vspace{-.4cm}\\
1\to 0&\mbox{ with rate }\alpha f_0.
\end{array}
\right.
\end{split}
\end{align}\end{linenomath*}
As before, we suppress sites and configurations to be updated in this notation. The corresponding $\{0,1\}^E$-valued Markov processes can be characterized analogously as in (\ref{def:flip}).  

The first set of flip rates in (\ref{def:fr11}) corresponds to pairwise annihilation. The children of two neighbors randomly chosen according to the transition probability kernel $q$ try to invade a  focal site, namely the site chosen to be updated, subject to pairwise annihilation. To see this interpretation, first we write the flip rates for site $x$ given population configuration $\eta$ as follows:
\begin{linenomath*}\begin{align*}
(1-\alpha)f_0(x,\eta)f_1(x,\eta)&=(1-\alpha)f_1(x,\eta)f_0(x,\eta)\\
&=(1-\alpha)\left(\sum_{y\in E}q(x,y)\eta(y)\right)\left(\sum_{z\in E}q(x,z)[1-\eta(z)]\right).
\end{align*}\end{linenomath*}
Hence, a spin flip at $x$ is triggered if and only if a $1$-individual $y$ and a $0$-individual $z$ are chosen (according to the kernel $q$).

Let us use this property to explain the implied effect of pairwise annihilation.
If the underlying spatial structure is a graph and we let a filled vertex denote a vertex occupied by a $1$-individual and an empty vertex denote one occupied by a $0$-individual, then we can visualize the complete set of possible transitions as follows:
\begin{linenomath*}\begin{align}\label{diag_2}
\begin{split}
\begin{tikzpicture}
\node at (-.3,.3) {$x$};
\node at (.8,.6) {$y$};
\node at (.8,0) {$z$};
\node at (2.7,.3) {$x$};
\node at (3.8,.6) {$y$};
\node at (3.8,.0) {$z$};
\draw[thick] (0,.3) -- (.5,0); 
\draw[thick](0,.3)--(.5,.6);
\draw[thick] (3,.3) -- (3.5,0); 
\draw[thick](3,.3)--(3.5,.6);
\draw[thick,->] (1.2,.3)--(2.2,.3);
\draw[fill=black] (0,.3) circle (3pt);
\draw[fill=black] (.5,0) circle (3pt);
\draw[fill=black] (.5,.6) circle (3pt);
\draw[fill=black] (3,.3) circle (3pt);
\draw[fill=black] (3.5,0) circle (3pt);
\draw[fill=black] (3.5,.6) circle (3pt);
\end{tikzpicture}
\begin{tikzpicture}
\node at (-.3,.3) {$x$};
\node at (.8,.6) {$y$};
\node at (.8,0) {$z$};
\node at (2.7,.3) {$x$};
\node at (3.8,.6) {$y$};
\node at (3.8,.0) {$z$};
\draw[thick] (0,.3) -- (.5,0); 
\draw[thick](0,.3)--(.5,.6);
\draw[thick] (3,.3) -- (3.5,0); 
\draw[thick](3,.3)--(3.5,.6);
\draw[thick,->] (1.2,.3)--(2.2,.3);
\draw[fill=black] (0,.3) circle (3pt);
\draw[fill=black] (.5,0) circle (3pt);
\draw[fill=white] (.5,.6) circle (3pt);
\draw[fill=white] (3,.3) circle (3pt);
\draw[fill=black] (3.5,0) circle (3pt);
\draw[fill=white] (3.5,.6) circle (3pt);
\end{tikzpicture}\\[-5pt]
\begin{tikzpicture}
\node at (-.3,.3) {$x$};
\node at (.8,.6) {$y$};
\node at (.8,0) {$z$};
\node at (2.7,.3) {$x$};
\node at (3.8,.6) {$y$};
\node at (3.8,.0) {$z$};
\draw[thick] (0,.3) -- (.5,0); 
\draw[thick](0,.3)--(.5,.6);
\draw[thick] (3,.3) -- (3.5,0); 
\draw[thick](3,.3)--(3.5,.6);
\draw[thick,->] (1.2,.3)--(2.2,.3);
\draw[fill=black] (0,.3) circle (3pt);
\draw[fill=white] (.5,0) circle (3pt);
\draw[fill=black] (.5,.6) circle (3pt);
\draw[fill=white] (3,.3) circle (3pt);
\draw[fill=white] (3.5,0) circle (3pt);
\draw[fill=black] (3.5,.6) circle (3pt);
\end{tikzpicture}
\begin{tikzpicture}
\node at (-.3,.3) {$x$};
\node at (.8,.6) {$y$};
\node at (.8,0) {$z$};
\node at (2.7,.3) {$x$};
\node at (3.8,.6) {$y$};
\node at (3.8,.0) {$z$};
\draw[thick] (0,.3) -- (.5,0); 
\draw[thick](0,.3)--(.5,.6);
\draw[thick] (3,.3) -- (3.5,0); 
\draw[thick](3,.3)--(3.5,.6);
\draw[thick,->] (1.2,.3)--(2.2,.3);
\draw[fill=black] (0,.3) circle (3pt);
\draw[fill=white] (.5,0) circle (3pt);
\draw[fill=white] (.5,.6) circle (3pt);
\draw[fill=black] (3,.3) circle (3pt);
\draw[fill=white] (3.5,0) circle (3pt);
\draw[fill=white] (3.5,.6) circle (3pt);
\end{tikzpicture}\\[-5pt]
\begin{tikzpicture}
\node at (-.3,.3) {$x$};
\node at (.8,.6) {$y$};
\node at (.8,0) {$z$};
\node at (2.7,.3) {$x$};
\node at (3.8,.6) {$y$};
\node at (3.8,.0) {$z$};
\draw[thick] (0,.3) -- (.5,0); 
\draw[thick](0,.3)--(.5,.6);
\draw[thick] (3,.3) -- (3.5,0); 
\draw[thick](3,.3)--(3.5,.6);
\draw[thick,->] (1.2,.3)--(2.2,.3);
\draw[fill=white] (0,.3) circle (3pt);
\draw[fill=black] (.5,0) circle (3pt);
\draw[fill=black] (.5,.6) circle (3pt);
\draw[fill=white] (3,.3) circle (3pt);
\draw[fill=black] (3.5,0) circle (3pt);
\draw[fill=black] (3.5,.6) circle (3pt);
\end{tikzpicture}
\begin{tikzpicture}
\node at (-.3,.3) {$x$};
\node at (.8,.6) {$y$};
\node at (.8,0) {$z$};
\node at (2.7,.3) {$x$};
\node at (3.8,.6) {$y$};
\node at (3.8,.0) {$z$};
\draw[thick] (0,.3) -- (.5,0); 
\draw[thick](0,.3)--(.5,.6);
\draw[thick] (3,.3) -- (3.5,0); 
\draw[thick](3,.3)--(3.5,.6);
\draw[thick,->] (1.2,.3)--(2.2,.3);
\draw[fill=white] (0,.3) circle (3pt);
\draw[fill=black] (.5,0) circle (3pt);
\draw[fill=white] (.5,.6) circle (3pt);
\draw[fill=black] (3,.3) circle (3pt);
\draw[fill=black] (3.5,0) circle (3pt);
\draw[fill=white] (3.5,.6) circle (3pt);
\end{tikzpicture}\\[-5pt]
\begin{tikzpicture}
\node at (-.3,.3) {$x$};
\node at (.8,.6) {$y$};
\node at (.8,0) {$z$};
\node at (2.7,.3) {$x$};
\node at (3.8,.6) {$y$};
\node at (3.8,.0) {$z$};
\draw[thick] (0,.3) -- (.5,0); 
\draw[thick](0,.3)--(.5,.6);
\draw[thick] (3,.3) -- (3.5,0); 
\draw[thick](3,.3)--(3.5,.6);
\draw[thick,->] (1.2,.3)--(2.2,.3);
\draw[fill=white] (0,.3) circle (3pt);
\draw[fill=white] (.5,0) circle (3pt);
\draw[fill=black] (.5,.6) circle (3pt);
\draw[fill=black] (3,.3) circle (3pt);
\draw[fill=white] (3.5,0) circle (3pt);
\draw[fill=black] (3.5,.6) circle (3pt);
\end{tikzpicture}
\begin{tikzpicture}
\node at (-.3,.3) {$x$};
\node at (.8,.6) {$y$};
\node at (.8,0) {$z$};
\node at (2.7,.3) {$x$};
\node at (3.8,.6) {$y$};
\node at (3.8,.0) {$z$};
\draw[thick] (0,.3) -- (.5,0); 
\draw[thick](0,.3)--(.5,.6);
\draw[thick] (3,.3) -- (3.5,0); 
\draw[thick](3,.3)--(3.5,.6);
\draw[thick,->] (1.2,.3)--(2.2,.3);
\draw[fill=white] (0,.3) circle (3pt);
\draw[fill=white] (.5,0) circle (3pt);
\draw[fill=white] (.5,.6) circle (3pt);
\draw[fill=white] (3,.3) circle (3pt);
\draw[fill=white] (3.5,0) circle (3pt);
\draw[fill=white] (3.5,.6) circle (3pt);
\end{tikzpicture}
\end{split}
\end{align}\end{linenomath*}
Consider the following two examples from the first row of (\ref{diag_2}).
For the transition on the right-hand side, the focal site is $x$, and the two neighbors at sites  $y$ and $z$, randomly chosen according to the probability $q(x,\cdot)$ without replacement, try to invade site $x$ by their children. The resident $1$-individual at $x$ and the invading child of the $1$-individual at $z$ annihilate each other, and so the child of the $0$-individual at $y$ takes over the site $x$ after the update. A similar interpretation applies to the focal site $x$ for the transition on the left. But this time, two of the three $1$-individuals (two from $y$ and $z$ plus the resident $1$-individual) annihilate each other, so that there is only one $1$-individual left at $x$ (we do not care where this survivor comes from). 

The second set of flip rates in (\ref{def:fr11}) defines a voting mechanism. Now, the flip rates can be written as
\[
\alpha f_1(x,\eta)=\alpha\sum_{y\in E}q(x,y)\eta(y)\quad\mbox{and}\quad \alpha f_0(x,\eta)=\alpha\sum_{y\in E}q(x,y)[1-\eta(y)]
\]
so that there is a spin flip at $x$ if and only if a neighbor of the opposite type is chosen:
\begin{linenomath*}\begin{align}
\begin{split}\label{diag_1}
\begin{tikzpicture}
\node at (-.3,.3) {$x$};
\node at (.8,0.3) {$y$};
\node at (3.8,.3) {$y$};
\node at (2.7,.3) {$x$};
\draw[thick](0,.3)--(.5,.3);
\draw[thick](3,.3)--(3.5,.3);
\draw[thick,->] (1.2,.3)--(2.2,.3);
\draw[fill=black] (0,.3) circle (3pt);
\draw[fill=black] (.5,.3) circle (3pt);
\draw[fill=black] (3,.3) circle (3pt);
\draw[fill=black] (3.5,.3) circle (3pt);
\end{tikzpicture}
\begin{tikzpicture}
\node at (-.3,.3) {$x$};
\node at (.8,0.3) {$y$};
\node at (3.8,.3) {$y$};
\node at (2.7,.3) {$x$};
\draw[thick](0,.3)--(.5,.3);
\draw[thick](3,.3)--(3.5,.3);
\draw[thick,->] (1.2,.3)--(2.2,.3);
\draw[fill=white] (0,.3) circle (3pt);
\draw[fill=black] (.5,.3) circle (3pt);
\draw[fill=black] (3,.3) circle (3pt);
\draw[fill=black] (3.5,.3) circle (3pt);
\end{tikzpicture}
\\[-5pt]
\begin{tikzpicture}
\node at (-.3,.3) {$x$};
\node at (.8,0.3) {$y$};
\node at (3.8,.3) {$y$};
\node at (2.7,.3) {$x$};
\draw[thick](0,.3)--(.5,.3);
\draw[thick](3,.3)--(3.5,.3);
\draw[thick,->] (1.2,.3)--(2.2,.3);
\draw[fill=black] (0,.3) circle (3pt);
\draw[fill=white] (.5,.3) circle (3pt);
\draw[fill=white] (3,.3) circle (3pt);
\draw[fill=white] (3.5,.3) circle (3pt);
\end{tikzpicture}
\begin{tikzpicture}
\node at (-.3,.3) {$x$};
\node at (.8,0.3) {$y$};
\node at (3.8,.3) {$y$};
\node at (2.7,.3) {$x$};
\draw[thick](0,.3)--(.5,.3);
\draw[thick](3,.3)--(3.5,.3);
\draw[thick,->] (1.2,.3)--(2.2,.3);
\draw[fill=white] (0,.3) circle (3pt);
\draw[fill=white] (.5,.3) circle (3pt);
\draw[fill=white] (3,.3) circle (3pt);
\draw[fill=white] (3.5,.3) circle (3pt);
\end{tikzpicture}
\end{split}
\end{align}\end{linenomath*}
On the left of the second row of (\ref{diag_1}), the focal site chosen for update is again $x$, and the new type is chosen randomly from one of the neighbors, which is $y$, so that a $0$-individual replaces the resident $1$-individual at $x$.

Now we explain how to use the quotient group $\Z_2=\Z/2\Z$ to `linearize' the mechanisms in (\ref{def:fr11}).
For any $x,y,z\in E$, we first define an $E\times E$ matrix with entries in $\mathbb Z_2$ by
\[\{x\}\times \{y,z\} \stackrel{\rm def}{=} \left(a_{v,w}\right)_{v,w\in E}\quad\text{with non-zero entries } a_{x,y}=a_{x,z}=1.\] 
Then for the pairwise annihilation mechanism, we consider the standard superposition of Poisson processes and the following Poisson updates by linear transformations: for all $\eta\in \mathbb Z_2^E$ and $x,y,z\in E$:
\begin{align}\label{ann}
&\eta\to \eta+\{x\}\times \{y,z\}\eta\quad \mbox{ with rate }r(\{x\}\times \{y,z\})=(1-\alpha) q(x,y)q(x,z).
\end{align}
Here, the configuration after the update can be written as
\[
\eta+\{x\}\times \{y,z\}\eta=[\eta-\eta(x)\1_x]+[\eta(x)+\eta(y)+\eta(z)]\1_x.
\]
The difference in the first pair of brackets shows that the types outside $x$ are kept fixed and we clear the site $x$ by setting it to be zero. The sum in the second pair of brackets  shows precisely pairwise annihilation due to the group structure of $\mathbb Z_2$ and always gives us a value in $\{0,1\}$. Here and in what follows, we continue to use addition modulo $2$ whenever $\Z_2$ is used.

For the voting mechanism, we introduce independent Poisson updates by the following linear transformations:
\begin{align}
\eta\to \eta+\{x\}\times \{x,y\}\eta\quad\mbox{with rate }r(\{x\}\times \{x,y\})=\alpha q(x,y).\label{voter}
\end{align}
Now the configuration after the update can be written as
\begin{align}
\{x\}\times \{x,y\}\eta&=[\eta-\eta(x)\1_x]+[\eta(x)+\eta(x)+\eta(y)]\1_x\notag\\
&=[\eta-\eta(x)\1_x]+\eta(y)\1_x,\label{voter-0}
\end{align}
and so we have removal and adoption of types at $x$ by the first and second terms of (\ref{voter-0}), respectively. We write the matrices defining the updates in (\ref{ann}) and (\ref{voter}) as $\Id+J$ in the following.

On a finite set $E$, the above linear construction of the symmetric Neuhauser--Pacala model by Poisson events can be made more explicit by a random-walk type construction.
Here, we time-change a discrete-time Markov chain by an independent Poisson process (see \cite[Section~2.6]{Norris}). In detail, with initial condition $\1_A$, we define a $\Z_2^E$-valued process by
\begin{linenomath*}\begin{align}\label{construct:linear}
\eta^A_t\stackrel{\rm def}{=} &\;(\Id+J_{N_t})\cdots (\Id+J_1)\1_A, 
\end{align}\end{linenomath*}
where $J_1,J_2,\ldots$ are i.i.d. random matrices with entries in $\{0,1\}$ and law 
\[
\P(J_1=J)=\frac{r(J)}{\sum_{J'}r(J')}
\]
and $(N_t)$ is an independent Poisson process with rate 
\[
\E N_1=\sum_{J'}r(J'). 
\] 

By the construction in (\ref{construct:linear}), we can imitate the usual time-reversal duality of random walks and easily get a duality of the symmetric Neuhauser--Pacala model:
\begin{linenomath*}\begin{align}
\langle \1_B,\eta^A_t\rangle = &\,\1_B^\top (\Id+J_{N_t})\cdots (\Id+J_1)\1_A\label{dual0}\\
=&\,[(\Id+J^\top _{1})\cdots (\Id+J^\top_{N_t})\1_B]^\top \1_A\notag \\
=&\, \langle \widehat{\eta}^{t,B}_t,\1_A\rangle
\stackrel{\rm (d)}{=}\, \langle \widehat{\eta}^B_t,\1_A\rangle,\label{dual}
\end{align}\end{linenomath*}
where $\langle a,b\rangle=\sum_{x\in E}a_xb_x$ for any $a,b\in \Z_2^E$.
In other words, the inner product $(\xi,\eta)\mapsto  \langle \xi,\eta\rangle$  is used as the bivariate dual function and 
\begin{linenomath*}\begin{align}\label{dual+1}
\widehat{\eta}^B_t\stackrel{\rm def}{=}(\Id+J^\top_{N_t})\cdots (\Id+J^\top_{1})\1_B
\end{align}\end{linenomath*}
is the dual Markov chain. 

To specify the update rule of the dual chain defined in (\ref{dual+1}), it is now more convenient to say that there are only $1$-individuals in the population and the sites not occupied by $1$-individuals are left empty. (Recall that for the symmetric Neuhauser--Pacala model, we use the interpretation that there are $0$-individuals and $1$-individuals in the population.)

By transposing the linear transformation in (\ref{ann}), we obtain a mechanism featuring branching with pairwise annihilation:
\begin{linenomath*}\begin{align}\label{ann-2}
&\xi\to \xi+\{y,z\}\times \{x\}\xi\equiv [\xi-\xi(y)\1_y-\xi(z)\1_z]+[\xi(x)+\xi(y)]\1_y+[\xi(x)+\xi(z)]\1_z.
\end{align}\end{linenomath*}
If $x$ is chosen as the focal site and is occupied by a $1$-individual, it gives birth to two children and they try to invade two sites $y$ and $z$ subject to the pairwise annihilation  by the last two terms as before due to the group structure of $\Z_2$. The set of all possible transitions can be visualized as follows:
\begin{linenomath*}\begin{align}
\begin{split}\label{diag_dual2}
\begin{tikzpicture}
\node at (-.3,.3) {$x$};
\node at (.8,.6) {$y$};
\node at (.8,0) {$z$};
\node at (2.7,.3) {$x$};
\node at (3.8,.6) {$y$};
\node at (3.8,.0) {$z$};
\draw[thick] (0,.3) -- (.5,0); 
\draw[thick](0,.3)--(.5,.6);
\draw[thick] (3,.3) -- (3.5,0); 
\draw[thick](3,.3)--(3.5,.6);
\draw[thick,->] (1.2,.3)--(2.2,.3);
\draw[fill=black] (0,.3) circle (3pt);
\draw[fill=black] (.5,0) circle (3pt);
\draw[fill=black] (.5,.6) circle (3pt);
\draw[fill=black] (3,.3) circle (3pt);
\draw[fill=white] (3.5,0) circle (3pt);
\draw[fill=white] (3.5,.6) circle (3pt);
\end{tikzpicture}
\begin{tikzpicture}
\node at (-.3,.3) {$x$};
\node at (.8,.6) {$y$};
\node at (.8,0) {$z$};
\node at (2.7,.3) {$x$};
\node at (3.8,.6) {$y$};
\node at (3.8,.0) {$z$};
\draw[thick] (0,.3) -- (.5,0); 
\draw[thick](0,.3)--(.5,.6);
\draw[thick] (3,.3) -- (3.5,0); 
\draw[thick](3,.3)--(3.5,.6);
\draw[thick,->] (1.2,.3)--(2.2,.3);
\draw[fill=black] (0,.3) circle (3pt);
\draw[fill=black] (.5,0) circle (3pt);
\draw[fill=white] (.5,.6) circle (3pt);
\draw[fill=black] (3,.3) circle (3pt);
\draw[fill=white] (3.5,0) circle (3pt);
\draw[fill=black] (3.5,.6) circle (3pt);
\end{tikzpicture}
\\[-5pt]
\begin{tikzpicture}
\node at (-.3,.3) {$x$};
\node at (.8,.6) {$y$};
\node at (.8,0) {$z$};
\node at (2.7,.3) {$x$};
\node at (3.8,.6) {$y$};
\node at (3.8,.0) {$z$};
\draw[thick] (0,.3) -- (.5,0); 
\draw[thick](0,.3)--(.5,.6);
\draw[thick] (3,.3) -- (3.5,0); 
\draw[thick](3,.3)--(3.5,.6);
\draw[thick,->] (1.2,.3)--(2.2,.3);
\draw[fill=black] (0,.3) circle (3pt);
\draw[fill=white] (.5,0) circle (3pt);
\draw[fill=black] (.5,.6) circle (3pt);
\draw[fill=black] (3,.3) circle (3pt);
\draw[fill=black] (3.5,0) circle (3pt);
\draw[fill=white] (3.5,.6) circle (3pt);
\end{tikzpicture}
\begin{tikzpicture}
\node at (-.3,.3) {$x$};
\node at (.8,.6) {$y$};
\node at (.8,0) {$z$};
\node at (2.7,.3) {$x$};
\node at (3.8,.6) {$y$};
\node at (3.8,.0) {$z$};
\draw[thick] (0,.3) -- (.5,0); 
\draw[thick](0,.3)--(.5,.6);
\draw[thick] (3,.3) -- (3.5,0); 
\draw[thick](3,.3)--(3.5,.6);
\draw[thick,->] (1.2,.3)--(2.2,.3);
\draw[fill=black] (0,.3) circle (3pt);
\draw[fill=white] (.5,0) circle (3pt);
\draw[fill=white] (.5,.6) circle (3pt);
\draw[fill=black] (3,.3) circle (3pt);
\draw[fill=black] (3.5,0) circle (3pt);
\draw[fill=black] (3.5,.6) circle (3pt);
\end{tikzpicture}
\\[-5pt]
\begin{tikzpicture}
\node at (-.3,.3) {$x$};
\node at (.8,.6) {$y$};
\node at (.8,0) {$z$};
\node at (2.7,.3) {$x$};
\node at (3.8,.6) {$y$};
\node at (3.8,.0) {$z$};
\draw[thick] (0,.3) -- (.5,0); 
\draw[thick](0,.3)--(.5,.6);
\draw[thick] (3,.3) -- (3.5,0); 
\draw[thick](3,.3)--(3.5,.6);
\draw[thick,->] (1.2,.3)--(2.2,.3);
\draw[fill=white] (0,.3) circle (3pt);
\draw[fill=black] (.5,0) circle (3pt);
\draw[fill=black] (.5,.6) circle (3pt);
\draw[fill=white] (3,.3) circle (3pt);
\draw[fill=black] (3.5,0) circle (3pt);
\draw[fill=black] (3.5,.6) circle (3pt);
\end{tikzpicture}
\begin{tikzpicture}
\node at (-.3,.3) {$x$};
\node at (.8,.6) {$y$};
\node at (.8,0) {$z$};
\node at (2.7,.3) {$x$};
\node at (3.8,.6) {$y$};
\node at (3.8,.0) {$z$};
\draw[thick] (0,.3) -- (.5,0); 
\draw[thick](0,.3)--(.5,.6);
\draw[thick] (3,.3) -- (3.5,0); 
\draw[thick](3,.3)--(3.5,.6);
\draw[thick,->] (1.2,.3)--(2.2,.3);
\draw[fill=white] (0,.3) circle (3pt);
\draw[fill=black] (.5,0) circle (3pt);
\draw[fill=white] (.5,.6) circle (3pt);
\draw[fill=white] (3,.3) circle (3pt);
\draw[fill=black] (3.5,0) circle (3pt);
\draw[fill=white] (3.5,.6) circle (3pt);
\end{tikzpicture}
\\[-5pt]
\begin{tikzpicture}
\node at (-.3,.3) {$x$};
\node at (.8,.6) {$y$};
\node at (.8,0) {$z$};
\node at (2.7,.3) {$x$};
\node at (3.8,.6) {$y$};
\node at (3.8,.0) {$z$};
\draw[thick] (0,.3) -- (.5,0); 
\draw[thick](0,.3)--(.5,.6);
\draw[thick] (3,.3) -- (3.5,0); 
\draw[thick](3,.3)--(3.5,.6);
\draw[thick,->] (1.2,.3)--(2.2,.3);
\draw[fill=white] (0,.3) circle (3pt);
\draw[fill=white] (.5,0) circle (3pt);
\draw[fill=black] (.5,.6) circle (3pt);
\draw[fill=white] (3,.3) circle (3pt);
\draw[fill=white] (3.5,0) circle (3pt);
\draw[fill=black] (3.5,.6) circle (3pt);
\end{tikzpicture}
\begin{tikzpicture}
\node at (-.3,.3) {$x$};
\node at (.8,.6) {$y$};
\node at (.8,0) {$z$};
\node at (2.7,.3) {$x$};
\node at (3.8,.6) {$y$};
\node at (3.8,.0) {$z$};
\draw[thick] (0,.3) -- (.5,0); 
\draw[thick](0,.3)--(.5,.6);
\draw[thick] (3,.3) -- (3.5,0); 
\draw[thick](3,.3)--(3.5,.6);
\draw[thick,->] (1.2,.3)--(2.2,.3);
\draw[fill=white] (0,.3) circle (3pt);
\draw[fill=white] (.5,0) circle (3pt);
\draw[fill=white] (.5,.6) circle (3pt);
\draw[fill=white] (3,.3) circle (3pt);
\draw[fill=white] (3.5,0) circle (3pt);
\draw[fill=white] (3.5,.6) circle (3pt);
\end{tikzpicture}
\end{split}
\end{align}\end{linenomath*}
On the left-hand side of the first row of (\ref{diag_dual2}), all of the three sites $x,y,z$ are occupied by $1$-individuals, but after the update, the child of $x$ invading site $y$ and the resident $1$-individual at $y$ annihilate each other so that an empty site is created. The same holds at  site $z$. 
On the right-hand side of the first row of (\ref{diag_dual2}), site $y$ becomes occupied by a $1$-individual after the update, but pairwise annihilation still occurs at site $z$.

Now  the transposed linear transformation in  (\ref{voter-0}) defines a random walk with annihilation: 
\begin{linenomath*}\begin{align}\label{voter-2}
\xi\to \xi+\{x,y\}\times \{x\}\xi&\equiv  [\xi-\xi(x)\1_x-\xi(y)\1_y]+[\xi(x)+\xi(x)]\1_x+[\xi(x)+\xi(y)]\1_y\notag\\
&= [\xi-\xi(x)\1_x-\xi(y)\1_y]+[\xi(x)+\xi(y)]\1_y,
\end{align}\end{linenomath*}
so that the the individual at site $x$ moves to site $y$ subject to pairwise annihilation:
\begin{linenomath*}\begin{align}\label{diag_dual1}
\begin{split}
\begin{tikzpicture}
\node at (-.3,.3) {$x$};
\node at (.8,0.3) {$y$};
\node at (3.8,.3) {$y$};
\node at (2.7,.3) {$x$};
\draw[thick](0,.3)--(.5,.3);
\draw[thick](3,.3)--(3.5,.3);
\draw[thick,->] (1.2,.3)--(2.2,.3);
\draw[fill=black] (0,.3) circle (3pt);
\draw[fill=black] (.5,.3) circle (3pt);
\draw[fill=white] (3,.3) circle (3pt);
\draw[fill=white] (3.5,.3) circle (3pt);
\end{tikzpicture}
\begin{tikzpicture}
\node at (-.3,.3) {$x$};
\node at (.8,0.3) {$y$};
\node at (3.8,.3) {$y$};
\node at (2.7,.3) {$x$};
\draw[thick](0,.3)--(.5,.3);
\draw[thick](3,.3)--(3.5,.3);
\draw[thick,->] (1.2,.3)--(2.2,.3);
\draw[fill=white] (0,.3) circle (3pt);
\draw[fill=black] (.5,.3) circle (3pt);
\draw[fill=white] (3,.3) circle (3pt);
\draw[fill=black] (3.5,.3) circle (3pt);
\end{tikzpicture}\\[-5pt]
\begin{tikzpicture}
\node at (-.3,.3) {$x$};
\node at (.8,0.3) {$y$};
\node at (3.8,.3) {$y$};
\node at (2.7,.3) {$x$};
\draw[thick](0,.3)--(.5,.3);
\draw[thick](3,.3)--(3.5,.3);
\draw[thick,->] (1.2,.3)--(2.2,.3);
\draw[fill=black] (0,.3) circle (3pt);
\draw[fill=white] (.5,.3) circle (3pt);
\draw[fill=white] (3,.3) circle (3pt);
\draw[fill=black] (3.5,.3) circle (3pt);
\end{tikzpicture}
\begin{tikzpicture}
\node at (-.3,.3) {$x$};
\node at (.8,0.3) {$y$};
\node at (3.8,.3) {$y$};
\node at (2.7,.3) {$x$};
\draw[thick](0,.3)--(.5,.3);
\draw[thick](3,.3)--(3.5,.3);
\draw[thick,->] (1.2,.3)--(2.2,.3);
\draw[fill=white] (0,.3) circle (3pt);
\draw[fill=white] (.5,.3) circle (3pt);
\draw[fill=white] (3,.3) circle (3pt);
\draw[fill=white] (3.5,.3) circle (3pt);
\end{tikzpicture}
\end{split}
\end{align}\end{linenomath*}
On the left-hand side of the first row of (\ref{diag_dual1}), the $1$-individual moves to site $y$, but this individual and the resident $1$-individual at $y$ annihilate each other. We get two empty sites in the end. 
On the right-hand side of the first row of (\ref{diag_dual1}), site $x$ is empty and so has no effect on the type occupying $y$ from the update event.

In contrast to the Neuhauser--Pacala model where a death event is immediately followed by a birth event at each updating step, the dual process is defined by reversing the orders of birth events and  death events, and thus, is an invasion process. Also, it is possible to have multiple sites where a change of types occurs in both mechanisms defining the dual process.

To fully define the dual process in (\ref{dual+1}), we also need to specify the rates of the associated Poisson processes. 
The Poisson process corresponding to (\ref{ann-2}) jumps with the same rate as that for (\ref{ann}):
\begin{linenomath*}\begin{align}\label{ann-3}
\widehat{r}(\{y,z\}\times \{x\})\stackrel{\rm def}{=}r(\{x\}\times \{y,z\}),
\end{align}\end{linenomath*}
whereas the Poisson process corresponding to (\ref{voter-2}) jumps with rate 
\begin{linenomath*}\begin{align}\label{voter-3}
\widehat{r}(\{x,y\}\times \{x\})\stackrel{\rm def}{=}r(\{x\}\times \{x,y\}) .
\end{align}\end{linenomath*}

Taking expectations of both sides of (\ref{dual}), we have proved the following theorem when $E$ is a finite set. It still holds on infinite sets. 

\begin{thm}[\bf Parity duality]\label{thm:PDE}
For any irreducible kernel $(E,q)$ with a zero trace, finite subsets $A,B$ of $E$, and $t\geq 0$, we have 
\begin{align}\label{parityeq}
\P\big(\langle \1_B,\eta^A_t\rangle\equiv 1\big)=\P\big(\langle \widehat{\eta}^B_t,\1_A\rangle\equiv 1\big),
\end{align}
where $\equiv$ means an equality mod $2$. 
\end{thm}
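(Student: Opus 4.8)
The plan is to prove (\ref{parityeq}) first for finite $E$, where the matrix construction (\ref{construct:linear}) is literally available, and then to bootstrap to arbitrary $E$ by localizing the computation to the finite space-time region explored by the dual. For finite $E$, everything is already contained in the chain (\ref{dual0})--(\ref{dual}), and I would present it as three ingredients. Writing $M_t\stackrel{\rm def}{=}(\Id+J_{N_t})\cdots(\Id+J_1)$ so that $\eta^A_t=M_t\1_A$, the first ingredient is the adjoint identity for the $\Z_2$-bilinear pairing, $\langle\1_B,M_t\1_A\rangle=\langle M_t^\top\1_B,\1_A\rangle$, together with $M_t^\top=(\Id+J_1^\top)\cdots(\Id+J_{N_t}^\top)$. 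The second is that the transposed factors $\Id+J_i^\top$ are exactly the dual updates (\ref{ann-2}) and (\ref{voter-2}), so the reversed product builds the dual chain and identifies $M_t^\top\1_B$ with $\widehat{\eta}^{t,B}_t$. The third is a time-reversal: since $J_1,J_2,\dots$ are i.i.d.\ and $(N_t)$ is an independent Poisson process, the reversed finite sequence $(J_{N_t},\dots,J_1)$ has, conditionally on $N_t$, the same law as $(J_1,\dots,J_{N_t})$, giving $\widehat{\eta}^{t,B}_t\stackrel{\rm (d)}{=}\widehat{\eta}^B_t$. Taking expectations of the indicator $\1\{\langle\cdot,\cdot\rangle\equiv 1\}$ of the almost sure identity then yields (\ref{parityeq}) on finite $E$.

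For infinite $E$ the product $M_t$ no longer makes sense, since the total rate $\sum_{J'}r(J')$ diverges, so I would build both processes on one probability space from a single Harris graphical representation: an independent family of Poisson processes indexed by the local update matrices of (\ref{ann}) and (\ref{voter}). Because $q$ is a transition probability, each focal site carries total update rate $(1-\alpha)+\alpha\le 1$ with summable dependence on $\eta$, so the forward spin system $(\eta^A_t)$ is a well-defined Feller process by the standard theory for spin systems with uniformly bounded, suitably regular rates (cf.~\cite{Liggett}). The key structural fact I would isolate is that the dual $(\widehat{\eta}^B_t)$ started from a finite set $B$ stays finite for every fixed $t$: each occupied site branches to at most two sites at total rate $\sum_{y,z}(1-\alpha)q(x,y)q(x,z)\le 1-\alpha$, while the voting transpose only moves or annihilates particles, so the number of occupied sites is dominated by a Yule process and is a.s.\ finite on $[0,t]$.

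With finiteness secured I would localize. The value $\langle\1_B,\eta^A_t\rangle$ depends only on the Poisson updates lying in the backward space-time region that can influence $B$ by time $t$, and by the graphical duality this region is precisely the trajectory of $(\widehat{\eta}^B_s)_{s\le t}$, hence a.s.\ supported on a finite set of sites $E_\omega\subset E$ and finitely many marks. Restricting the construction to $E_\omega$ reduces the computation to the finite case already treated, so the a.s.\ identity $\langle\1_B,\eta^A_t\rangle=\langle\widehat{\eta}^{t,B}_t,\1_A\rangle$ persists verbatim; conditional exchangeability of the finitely many relevant marks again gives $\widehat{\eta}^{t,B}_t\stackrel{\rm (d)}{=}\widehat{\eta}^B_t$, and taking expectations produces (\ref{parityeq}). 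Concretely this can be organized as an exhaustion $E_n\uparrow E$ by finite sets, applying the finite-case result on $E_n$ and passing to the limit on the event that the dual stays inside $E_n$ up to time $t$, whose probability tends to $1$.

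The main obstacle is the infinite-volume bookkeeping rather than any new algebra: one must (i) secure existence of the forward process, (ii) rule out explosion of the dual from finite $B$ in finite time, and (iii) argue that $\langle\1_B,\eta^A_t\rangle$ is measurable with respect to, and determined by, the finitely many marks in the dual's space-time support, so that both the transpose computation and the time-reversal transfer unchanged. The Yule domination handles (ii); step (iii) is the delicate point, requiring a careful coupling of the forward and backward constructions on the common graphical representation so that reversing time over the random, finite active region preserves the law of the dual.
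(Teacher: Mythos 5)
Your proposal is correct and follows essentially the same route as the paper: the finite-$E$ case is exactly the transpose-plus-time-reversal computation in (\ref{dual0})--(\ref{dual}), and the infinite-$E$ case is Harris's graphical representation with localization to the a.s.\ finite space-time region explored by the dual, which is the paper's second method. Your Yule-domination and exhaustion argument is simply a more explicit justification of the paper's local-finiteness observation (\ref{Poisson-finite}) and the reduction of the infinite matrix product to a finite one, so no new idea is involved, and no step is missing.
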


\begin{rmk}
We can also view a population configuration $\eta$ as the set of sites occupied by $1$'s in $\eta$. In this interpretation, $\langle \1_B,\eta\rangle$ is the  same as $|B\cap \eta|$. 
Hence, (\ref{parityeq}) is also an equality for the probability that there are odd many sites with $1$'s in $B$ under $\eta^A_t$ and the probability that there are odd many sites with $1$'s in $A$ under $\widehat{\eta}^B_t$. 
This interpretation is common in the literature due to its usefulness for geometric considerations of population configurations.\mbox{}\quad\mbox{}
\hfill $\blacksquare$
\end{rmk}

The validity of Theorem~\ref{thm:PDE} on infinite sets 
can be obtained by various methods. For example, one can check the following Feynman--Kac duality by generator calculations. 
The approach is to consider the forward equation satisfied by 
\[
\phi_A(B)=\phi_B(A)\stackrel{\rm def}{=}\1_{\{\langle \1_B,\1_A\rangle\equiv 1\}}
\]
under the symmetric Neuhauser--Pacala model (with semigroup $P_t=\exp\{t\mathsf L^{\rm NP}\}$) and then to show that this forward equation is also a backward equation under the dual process (with semigroup $(Q_t=\exp\{t\mathsf L^{\rm dual}\}$):
\begin{linenomath*}\begin{align}\label{FK}\frac{d}{dt}P_t\phi_B(A) =P_t\mathsf L^{\mbox{\scriptsize NP}}\phi_B(A)
=Q_t\mathsf L^{\rm dual}\phi_A(B)
\end{align}\end{linenomath*}
so that 
\[
P_t\phi_B(A)=Q_t\phi_A(B).
\]
Here, $\mathsf L^{\rm NP}$ denotes the generator of the Neuhauser--Pacala model and $\mathsf L^{\rm dual}$ denotes the generator of its dual.
In more detail, to do the algebra for the proof of the second equality in (\ref{FK}), one needs the possible transitions and rates defining $\mathsf L^{\rm NP}$ and  the possible transitions and rates in (\ref{ann-2}), (\ref{voter-2}), (\ref{ann-3}) and (\ref{voter-3}) defining 
 $\mathsf L^{\rm dual}$. We omit the details here, but in Section~\ref{sec:dual}, we will illustrate this algebraic method by  the interacting diffusions defined at the end of Section~\ref{sec:LV}. 
See \cite[Proposition~1]{NP} for this calculation and \cite[Section~III.4]{Liggett} or \cite{JK14,SSV} for more general discussions. 

Harris's graphical representation \cite{Harris} gives a different  proof of Theorem~\ref{thm:PDE} on infinite sets. 
To motivate the representation, first we consider the case that $E$ is finite. We redefine $(\eta^A_t)$ in  (\ref{construct:linear}) as follows. For each matrix $J$ such that $\Id +J$ is a linear transformation in (\ref{ann}) or (\ref{voter}), we define a Poisson process $N(J)=\{N_t(J);t\geq 0\}$ with rate $r(J)$. We assume that these Poisson processes are independent. 
 Order the arrival times of $\sum_J N(J)$ as \begin{align}\label{def:Tn}
 0=T_0<T_1<T_2<T_3<\cdots,
 \end{align} 
and write $J_n$ for the matrix $J$ such that $T_n$ is an arrival time of $N(J)$. Then by thinning and superposition of Poisson processes, the process $(\eta^A_t)$ constructed before has the same distribution as the following process which we also denote by $(\eta^A_t)$: 
\begin{align}\label{etaA:v2}
\eta^A_t=(\Id+J_n)\cdots (\Id +J_2)(\Id+J_1)\1_A\quad\mbox{if }T_n\leq t<T_{n+1}. 
\end{align}
See also \cite[Chapter~2]{Norris} for the usual construction of continuous-time Markov chains. 

We make some simple observations for matrix products to interpret the definition in (\ref{etaA:v2}).  In general,
for matrices $M_1,\ldots,M_n$ with entries in $\{0,1\}$, write
\begin{align}\begin{aligned}
&\1_B^\top  M_nM_{n-1}\cdots M_1\1_A=\sum_{x_n,x_{n-1},\ldots,x_0\in E}\1_B(x_n)M_n(x_n,x_{n-1})
\cdots M_1(x_1,x_0)\1_A(x_0).\label{MBA}
\end{aligned}\end{align}
Since the matrices have entries only in $\{0,1\}$, the summands are $\{0,1\}$-valued with
\begin{align}\label{path}
\1_B(x_n)M_n(x_n,x_{n-1})M_{n-1}(x_{n-1},x_{n-2})\cdots M_1(x_1,x_0)\1_A(x_0)=1
\end{align}
if and only if 
\[
1=\1_A(x_0)=M_1(x_1,x_0)=\cdots=M_n(x_n,x_{n-1})=\1_B(x_n).
\]
If an $E\times E$ matrix  $M$ with entries in $\{0,1\}$ is identified with a collection of oriented edges $x\to y$ in $E$ such that $M(y,x)=1$ and vice versa, then the product in (\ref{path}) corresponds to the following oriented path:
\begin{align}\label{path1}
x_0\to x_1 \to \cdots \to x_n.
\end{align}
We stress that $M(y,x)=1$ corresponds to the oriented path $x\to y$ rather than to the reversed one, since matrices are now multiplied from the right to the left as in (\ref{etaA:v2}). 
In summary, \mbox{$\1_B^\top  M_n\cdots M_1\1_A$} is the number of oriented paths from $A$ to $B$.

For the purpose of (\ref{etaA:v2}), we set all of the matrices $M_m$ in (\ref{MBA}) to be of the following form  
\[
M\equiv \Id+J ,
\]
where the sum is read modulo 2 entry-wise as usual. Then on $\{T_n\leq t<T_{n+1}\}$,  a nonzero term 
\[
\1_B(x_n)(\Id +J_n)(x_n,x_{n-1})(\Id +J_{n-1})(x_{n-1},x_{n-2})\cdots (\Id +J_1)(x_1,x_0)\1_A(x_0)
\]
in the representation of $\eta^A_t$ in (\ref{etaA:v2})
can be identified with an oriented path from $(x_0,0)$ to $(x_n,t)$ in $E\times \R_+$. Over $[0,t)$, this space-time path is defined by the pair $(x_j,s)$ when $s\in [T_j,T_{j+1})$. Hence, (\ref{etaA:v2}) can be written as
\begin{align}\label{etaA:v3}
\eta_t^A(x)\equiv \#\big\{\mbox{oriented space-time paths from $A\times \{0\}$ to $\{x\}\times \{t\}$}\big\}
\end{align}
for all $x\in E$.
We have obtained the graphical representation of $(\eta^A_t)$.
Moreover, for every fixed $t\in (0,\infty)$, the reversely oriented space-time paths between $A\times \{0\}$ and $B\times \{t\}$ define $\widehat{\eta}^{t,B}_t$ by the same counting procedure as in (\ref{etaA:v3}). 
In particular, 
 $\langle \widehat{\eta}^{t,B}_t,\1_A\rangle=\langle \1_B,\eta^A_t\rangle$.
 
\begin{example}
Let us visualize the graphical representation of $(\eta^A_t)$ in the following simple setting. 
 For convenience, we write 
\begin{tikzpicture}
\draw[fill=red,draw=none] (0,0) circle (3pt);
\end{tikzpicture} $=1$ and 
\begin{tikzpicture}
\draw[fill=blue,draw=none] (0,0) circle (3pt);
\end{tikzpicture} $=0$. If $E=\{x,y,z\}$, the initial condition $\1_y$ after the update $\{x\}\times \{y,z\}$ can be visualized by
\begin{center}
\setlength{\unitlength}{.6cm}
\vspace{-3.5cm}
\hspace{-3cm}
\begin{picture}(23,10)(0,-6)
\put(10,-5){\line(1,0){8}}
\put(10,-5){\vector(0,1){3}}
\put(18,-5.5){space}
\put(13.8,-5.6){$x$}
\put(11.8,-5.6){$y$}
\put(15.8,-5.6){$z$}
\put(8.5,-2.2){time}
\color{red}
\put(12,-4){\vector(1,0){2}}
\put(16,-4){\vector(-1,0){2}}
\color{blue}
\linethickness{.5mm}
\put(14,-5){\line(0,1){1}}
\put(16,-5){\line(0,1){3}}
\color{red}
\linethickness{.5mm}
\put(12,-5){\line(0,1){3}}
\put(14,-4){\line(0,1){2}}
\end{picture}
\end{center}
Similarly, for $E=\{x,y\}$,
the initial condition $\eta_0=\1_y$
after the voting update of $\eta\mapsto \eta+\{x\}\times \{x,y\}\eta$ can be visualized by
\begin{center}
\setlength{\unitlength}{.6cm}
\vspace{-3.5cm}
\hspace{-3cm}
\begin{picture}(23,10)(0,-6)
\put(10,-5){\line(1,0){5}}
\put(10,-5){\vector(0,1){3}}
\put(15,-5.5){space}
\put(13.8,-5.6){$x$}
\put(11.8,-5.6){$y$}
\put(8.5,-2.2){time}
\color{red}
\put(12,-4){\vector(1,0){2}}
\color{blue}
\linethickness{.5mm}
\put(14,-5){\line(0,1){1}}
\color{red}
\linethickness{.5mm}
\put(12,-5){\line(0,1){3}}
\put(14,-4){\line(0,1){2}}
\end{picture}
\end{center}
\hfill $\square$
\end{example}

Now we consider the case that $E$ is an infinite set.
We define independent Poisson processes $N(J)$ as before. 
 In this case, since $\sum_J r(J)$ is infinite,  the arrival times of $\sum_J N(J)$ cannot be ordered as in (\ref{def:Tn}). Moreover,  (\ref{etaA:v2}) shows that we have to deal with an infinite product of matrices now. 
These issues can be easily circumvented since the following picture holds with probability one: \emph{Locally}, there are only finitely many such arrival times which trigger nontrivial updates, and the infinite product of matrices can be reduced to a finite product of matrices.

Let us define these reductions precisely. For a general $E\times E$ matrix $M$ with entries in $\{0,1\}$, we define the {\bf range} of $M$ by 
\begin{align}\begin{aligned}
\mathcal R(M)&\stackrel{\rm def}{=}\{y:M(y,x)=1\mbox{ for some }x\in E\mbox{ and $x\neq y$}\}\\
&\qquad \cup \{x:M(y,x)=1\mbox{ for some }y\in E \mbox{ and $y\neq x$}\}.
\end{aligned}\end{align}
In terms of the above interpretation, $\mathcal R(M)$ is the set of endpoints of oriented edges in $E$ defined by the nonzero entries of $M$, but we disregard oriented edges which are self-loops. 
Write $0=T_0(J)<T_1(J)<\cdots $ for the arrival times of $N(J)$. Then the local picture follows from the observation that for all $x\in E$,
\begin{align}\label{Poisson-finite}
\sum_{J:x\in \mathcal R(\Id +J)}r(J)<\infty\Longrightarrow \sum_{J:x\in \mathcal R(\Id +J)}N_t(J)<\infty\mbox{ a.s. for all $t\geq 0$.}
\end{align}
We draw oriented edges  induced by $\Id+J$ at all of the times $T_n(J)$ and require that these oriented edges are not self-loops.
Then the property of the Poisson processes in (\ref{Poisson-finite}) implies that up to a fixed time $t\in (0,\infty)$, there are only finitely many oriented edges in $E$ which are not self-loops and use $x$ as the endpoints. Therefore, a definition of $\eta^A$ by (\ref{etaA:v3}) and an analogous definition for $\widehat{\eta}^{t,B}$
 apply again. We still have $\langle \widehat{\eta}^{t,B}_t,\1_A\rangle=\langle \1_B,\eta^A_t\rangle$.

The usefulness of the parity duality in Theorem~\ref{thm:PDE} follows from the fact that parity events $\{\eta;\langle \1_B,\eta\rangle\equiv 1\}$, $B\subseteq E$, uniquely determine a measure.

\begin{prop}\label{prop:measure}
For finite measures $\nu_1$ and $\nu_2$ with the same total mass,
$\nu_1\{\eta;\langle\1_B,\eta\rangle\equiv 1\}=\nu_2\{\eta;\langle  \1_B,\eta\rangle\equiv 1\}$ for all finite subsets $B$ of $E$ implies that $\nu_1=\nu_2$.
\end{prop}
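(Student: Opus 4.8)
The plan is to convert the parity events into statements about Walsh-type characters and then invert. For a finite $B\subseteq E$ set $\chi_B(\eta)\stackrel{\rm def}{=}(-1)^{\langle \1_B,\eta\rangle}=\prod_{x\in B}\bigl(1-2\eta(x)\bigr)$, using that $1-2\eta(x)=(-1)^{\eta(x)}$ because $\eta(x)\in\{0,1\}$. Then the parity indicator factors as $\1_{\{\langle\1_B,\eta\rangle\equiv 1\}}=(1-\chi_B)/2$, so that
\[
\nu_i\{\eta:\langle \1_B,\eta\rangle\equiv 1\}=\tfrac12\Bigl(\nu_i(\{0,1\}^E)-\int \chi_B\,d\nu_i\Bigr).
\]
Since $\nu_1$ and $\nu_2$ are assumed to have the same total mass, the hypothesis is \emph{equivalent} to $\int\chi_B\,d\nu_1=\int\chi_B\,d\nu_2$ for every finite $B$, the case $B=\emptyset$ (where $\chi_\emptyset\equiv 1$) being exactly the equal-mass assumption. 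Thus I have reduced the claim to showing that the integrals of all the functions $\chi_B$ determine a finite measure.

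Next I would show that for every finite $F\subseteq E$ the linear span of $\{\chi_B:B\subseteq F\}$ is the entire space of functions depending only on the coordinates in $F$. This is a finite-dimensional fact: under the uniform measure on $\{0,1\}^F$ one has $\chi_B\chi_C=\chi_{B\triangle F\cap C}$, more precisely $\chi_B\chi_C=\chi_{B\triangle C}$ and $\int\chi_D=\1_{\{D=\emptyset\}}$, so the $2^{|F|}$ functions $\chi_B$ are orthonormal, hence a basis of the $2^{|F|}$-dimensional space of all real functions on $\{0,1\}^F$. Concretely, inverting $\eta(x)=(1-\chi_{\{x\}})/2$ and expanding the cylinder indicator
\[
\1_{\{\eta\rest F=\omega\}}=\prod_{x\in F}\bigl[\omega(x)\eta(x)+(1-\omega(x))(1-\eta(x))\bigr]
\]
writes each cylinder indicator as a finite linear combination of the $\chi_B$ with $B\subseteq F$. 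Integrating against $\nu_1$ and $\nu_2$ and invoking the reduction of the first paragraph yields $\nu_1\{\eta\rest F=\omega\}=\nu_2\{\eta\rest F=\omega\}$ for every finite $F$ and every $\omega\in\{0,1\}^F$.

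Finally I would upgrade this agreement on finite-dimensional (cylinder) events to $\nu_1=\nu_2$. The cylinder sets form an algebra generating the product $\sigma$-algebra on $\{0,1\}^E$, so two finite measures of equal total mass that agree on it must coincide, by the standard $\pi$--$\lambda$ (monotone class) uniqueness theorem. The only genuinely substantive step is the spanning claim of the middle paragraph, namely that the parity functions generate every local function; once that orthogonality/inversion identity is in hand, the passage from characters to cylinders and from cylinders to the full measure is routine bookkeeping. I therefore expect the orthogonality-and-basis argument to be the heart of the matter, while the measure-theoretic extension is a standard application of a known uniqueness theorem.
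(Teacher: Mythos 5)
Your proof is correct and takes essentially the same route as the paper: both convert the parity events into integrals of the sign characters $(-1)^{\langle \1_B,\eta\rangle}=\prod_{x\in B}\bigl(1-2\eta(x)\bigr)$ (the paper's test functions $\prod_{x\in A}[2\eta(x)-1]$ are the same up to sign) and then show that these integrals determine all cylinder probabilities. The only difference is in execution: you expand cylinder indicators directly via Walsh orthonormality, whereas the paper reaches the moments $\int\prod_{x\in A}\eta(x)\,\nu_i(d\eta)$ by induction on $|A|$ and finishes with inclusion--exclusion, and your explicit $\pi$--$\lambda$ step at the end is left implicit there.
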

\begin{proof}[\bf Proof]
First, for all finite subsets $A$ of $E$, the test functions $\prod_{x\in A}[2\eta(x)-1]$ can be written as
\begin{linenomath*}\begin{align*}
\prod_{x\in A}[2\eta(x)-1]=&(-1)^{\langle \1_A,\eta^\complement\rangle}=1-2\1_{\{\eta;\langle\1_A, \eta^\complement\rangle\equiv 1\}}.
\end{align*}\end{linenomath*}
($\prod_{x\in \varnothing}[2\eta(x)-1]$ and other products over an empty set used below are equal to $1$ by convention.)
Hence, the assumption on $\nu_1$ and $\nu_2$ shows that for all finite subsets $A$ of $E$,  
\[
\int \prod_{x\in A}[2\eta(x)-1]\nu_1(d\eta)=\int \prod_{x\in A}[2\eta(x)-1]\nu_2(d\eta). 
\]
Also, expanding $\prod_{x\in A}[2\eta(x)-1]$ yields
\begin{linenomath*}\begin{align*}
2^{|A|}\prod_{x\in A}\eta(x)=\prod_{x\in A}[2\eta(x)-1]-\sum_{B:B\subsetneq A}2^{|B|}(-1)^{|A-B|}\prod_{x\in B}\eta(x).
\end{align*}\end{linenomath*}
Hence, by the last two displays and an induction on $|A|$, we deduce that  
\[
\int \prod_{x\in A}\eta(x)\nu_1(d\eta)=\int \prod_{x\in A}\eta(x)\nu_2(d\eta)
\]
for all finite subsets $A$ of $E$. (We use the assumption that $\nu_1$ and $\nu_2$ have the same total mass for the initial induction step with $|A|=0$.)  Since $\eta(x)$ is $\{0,1\}$-valued for all $x\in E$, the foregoing equality can be restated as 
\[
\nu_1\left(\bigcap_{x\in A}\{\eta;\eta(x)=1\}\right)=\nu_2\left(\bigcap_{x\in A}\{\eta;\eta(x)=1\}\right) 
\]
and we have $\{\eta;\eta(x)=1\}=\{\eta;\eta(x)=0\}^\complement$ for all $x\in E$. Hence, applying  the inclusion-exclusion principle for the sets $\{\eta;\eta(x)=1\}$ shows that the two measures $\nu_1$ and $\nu_2$ are equal. 
\end{proof}

\subsection{Invariance of equiparity coexistence}\label{sec:NP_inv}
If we start the symmetric Neuhauser--Pacala model with the Bernoulli product measure $\beta_{1/2}$ with density $1/2$, then the corresponding parity duality from Theorem~\ref{thm:PDE} can be written as
\begin{linenomath*}\begin{align}\label{beta12}
\P_{\beta_{1/2}}(\langle \1_B,\eta_t\rangle\equiv 1)=\P\left(\sum_{m=1}^{\langle \widehat{\eta}^B_t,\1\rangle}X_m\equiv 1\right). 
\end{align}\end{linenomath*}
Here, $X_1,X_2,\ldots$ are i.i.d. $\{0,1\}$-valued Bernoulli variables with mean $1/2$ and independent of $\widehat{\eta}^B_t$. They come from the random spins at all sites under $\beta_{1/2}$.
By conditioning the event on the right-hand side of (\ref{beta12}) on $\widehat{\eta}^B_t$, one sees that
\begin{linenomath*}\begin{align}\label{beta121}
\P_{\beta_{1/2}}(\langle \1_B,\eta_t\rangle\equiv 1)=\frac{1}{2}\P(\widehat{\eta}^B_t\neq \mathbf 0).
\end{align}\end{linenomath*}
Hence, the problem of proving existence of the limit of $\P_{\beta_{1/2}}(\langle \1_B,\eta_t\rangle\equiv 1)$ as $t\to\infty$  is equivalent to the problem of calculating the survival probability in $(\widehat{\eta}^B_t)$.

In the rest of Section~\ref{sec:spin}, we give a brief discussion of calculating the limiting distribution of $(\eta^A_t)$ as $t\to\infty$, and the method is to refine what we just observed under the Bernoulli initial condition $\beta_{1/2}$. 
In this regard, the reader may wish to recall the discussion of (\ref{p0*}). 
Since $\lambda =1$ and $\alpha_{01}=\alpha_{10}=\alpha$ for the symmetric Neuhauser--Pacala model, it seems reasonable to expect that by suitable definitions,
equilibria of the model can be related to the value $p_0^*$ defined by (\ref{p0*}), which is $1/2$ under the present choice of $\lambda,\alpha_{01},\alpha_{10}$. This number $1/2$ is ubiquitous in the rest of this section.

The following theorem is our first theorem about $1/2$ in equilibria of the model.

\begin{thm}\label{thm:1}
If $\alpha=0$, then $\beta_{1/2}$ is invariant for the symmetric Neuhauser--Pacala model.
\end{thm}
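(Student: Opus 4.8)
The plan is to combine the parity duality (Theorem~\ref{thm:PDE}) with the measure-determination result (Proposition~\ref{prop:measure}). Invariance of $\beta_{1/2}$ is equivalent to showing that, for every $t\geq 0$, the law of $\eta_t$ started from $\beta_{1/2}$ assigns to each parity event $\{\eta;\langle \1_B,\eta\rangle\equiv 1\}$ the same mass as $\beta_{1/2}$ does; since both are probability measures (hence of equal total mass), Proposition~\ref{prop:measure} then forces them to coincide. For the initial measure one checks directly that $\beta_{1/2}(\langle \1_B,\eta\rangle\equiv 1)=\tfrac12$ for every nonempty finite $B$, because the parity of a nonempty family of independent fair coins is again fair (the case $B=\varnothing$ being trivial). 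Via the identity (\ref{beta121}), the task thus reduces to proving that $\P(\widehat\eta^B_t\neq \mathbf 0)=1$ for all $t\geq 0$ and all nonempty finite $B$; that is, the dual process started from $\1_B$ never reaches the empty configuration.

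First I would specialize the dual dynamics to $\alpha=0$. By (\ref{voter-3}) the voting rate $\alpha q(x,y)$ vanishes, so the only surviving mechanism is the branching-with-annihilation move (\ref{ann-2}), in which an occupied parent site $x$ sends offspring to two distinct sites $y,z$ (with $y\neq z$, and $y,z\neq x$ since $q$ has zero trace), updating $\widehat\eta(y)\mapsto \widehat\eta(y)\oplus\widehat\eta(x)$ and $\widehat\eta(z)\mapsto \widehat\eta(z)\oplus\widehat\eta(x)$ while leaving $\widehat\eta(x)$ unchanged. The crucial bookkeeping step is then to track the total mass $M_t=\langle \widehat\eta^B_t,\1\rangle$. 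Inspecting (\ref{ann-2}), each jump changes $M_t$ by $0$ or $\pm 2$; moreover a decrease by $2$ occurs precisely when the parent site and both offspring sites are simultaneously occupied, i.e.\ only from configurations with at least three occupied sites.

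The conclusion then follows: since $M_t$ moves only in steps of size $2$ and a downward step requires $M_{t^-}\geq 3$, the mass can never pass from a value in $\{1,2\}$ down to $0$, and hence $M_t\geq 1$ for all $t$ whenever $M_0=|B|\geq 1$. Therefore $\widehat\eta^B_t\neq \mathbf 0$ almost surely, $\P(\widehat\eta^B_t\neq\mathbf 0)=1$, and (\ref{beta121}) yields $\P_{\beta_{1/2}}(\langle \1_B,\eta_t\rangle\equiv 1)=\tfrac12$ for every nonempty finite $B$, matching the initial value; Proposition~\ref{prop:measure} closes the argument. I expect the main obstacle to be this non-extinction claim, and specifically the even-cardinality case: when $|B|$ is even the conserved parity of $M_t$ is also even and does not by itself exclude the value $0$, so the argument genuinely needs the sharper observation that a mass-reducing annihilation demands three occupied sites and is thus unavailable at mass $2$. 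A secondary point to verify is that the graphical construction keeps the dual well-defined (so that $M_t$ is a.s.\ finite and the pathwise mass analysis is legitimate), which on infinite $E$ is guaranteed by the local finiteness recorded in (\ref{Poisson-finite}).
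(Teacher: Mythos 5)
Your proposal is correct and takes essentially the same route as the paper's proof: parity duality plus (\ref{beta121}) reduce invariance of $\beta_{1/2}$ to non-extinction of the dual, which for $\alpha=0$ holds because every nontrivial update is a branching-with-annihilation event whose mass change is $-2$, $0$, or $+2$, with the $-2$ case requiring three occupied sites (equivalently, the parent site remains occupied), and Proposition~\ref{prop:measure} then finishes the argument. Your mass-bookkeeping formulation of the non-extinction step is just a restatement of the paper's diagrammatic observation, so no gap remains.
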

\begin{proof}[\bf Proof]
Recall that the parameter $\alpha$ from (\ref{def:alpha}) enters the weights of the two mechanisms in $(\eta^A_t)$ (see (\ref{ann}) and (\ref{voter})) and the dual process $(\widehat{\eta}^B_t)$ (see (\ref{ann-3}) and (\ref{voter-3})). 
If $\alpha=0$, then there is no random walk with annihilation in $(\widehat{\eta}^B_t)$. In this case, observe that  $(\widehat{\eta}^B_t)$ cannot die out unless $B=\varnothing$. Indeed, any empty site has no effect on any other sites. Algebraically, this is attributable to the fact that $0$ is the additive identity in $\mathbb Z_2$. On the other hand, any $1$-site $x$ can only change its neighbors in three ways:
\begin{linenomath*}\begin{align}
\begin{tikzpicture}
\node at (-.3,.3) {$x$};
\node at (4.5,0) {};
\node at (2.5,.8) {};
\node at (2.7,.3) {$x$};
\node at (7.5,0) {};
\node at (1.7,-.5) {$-2$};
\draw[thick] (0,.3) -- (.5,0); 
\draw[thick](0,.3)--(.5,.6);
\draw[thick] (3,.3) -- (3.5,0); 
\draw[thick](3,.3)--(3.5,.6);
\draw[thick,->] (1.2,.3)--(2.2,.3);
\draw[fill=black] (0,.3) circle (3pt);
\draw[fill=black] (.5,0) circle (3pt);
\draw[fill=black] (.5,.6) circle (3pt);
\draw[fill=black] (3,.3) circle (3pt);
\draw[fill=white] (3.5,0) circle (3pt);
\draw[fill=white] (3.5,.6) circle (3pt);
\end{tikzpicture}
\hspace{-3cm}
\begin{tikzpicture}
\node at (-.3,.3) {$x$};
\node at (4.5,0) {};
\node at (2.5,.8) {};
\node at (2.7,.3) {$x$};
\node at (7.5,0) {};
\node at (1.7,-.5) {$0$};
\draw[thick] (0,.3) -- (.5,0); 
\draw[thick](0,.3)--(.5,.6);
\draw[thick] (3,.3) -- (3.5,0); 
\draw[thick](3,.3)--(3.5,.6);
\draw[thick,->] (1.2,.3)--(2.2,.3);
\draw[fill=black] (0,.3) circle (3pt);
\draw[fill=white] (.5,0) circle (3pt);
\draw[fill=black] (.5,.6) circle (3pt);
\draw[fill=black] (3,.3) circle (3pt);
\draw[fill=black] (3.5,0) circle (3pt);
\draw[fill=white] (3.5,.6) circle (3pt);
\end{tikzpicture}
\hspace{-3cm}
\begin{tikzpicture}
\node at (-.3,.3) {$x$};
\node at (4.5,0) {};
\node at (2.5,.8) {};
\node at (2.7,.3) {$x$};
\node at (7.5,0) {};
\node at (1.7,-.5) {$+2$};
\draw[thick] (0,.3) -- (.5,0); 
\draw[thick](0,.3)--(.5,.6);
\draw[thick] (3,.3) -- (3.5,0); 
\draw[thick](3,.3)--(3.5,.6);
\draw[thick,->] (1.2,.3)--(2.2,.3);
\draw[fill=black] (0,.3) circle (3pt);
\draw[fill=white] (.5,0) circle (3pt);
\draw[fill=white] (.5,.6) circle (3pt);
\draw[fill=black] (3,.3) circle (3pt);
\draw[fill=black] (3.5,0) circle (3pt);
\draw[fill=black] (3.5,.6) circle (3pt);
\end{tikzpicture}
\hspace{-4cm}
\end{align}\end{linenomath*}
Here, the integer below each figure  is the difference of the numbers of filled sites between the right-hand graph and the left-hand graph. 

Now, by (\ref{beta12}) and (\ref{beta121}), we see that $\P_{\beta_{1/2}}(\langle \1_B,\eta_t\rangle\equiv 1)=1/2$ for all finite nonempty subsets $B$ of $E$. 
On the other hand, a symmetric argument shows that $\beta_{1/2}\{\eta;\langle \1_B,\eta\rangle\equiv 1\}=1/2$  for all finite nonempty subsets $B$ of $E$. 
By Proposition~\ref{prop:measure}, we conclude that $\P_{\beta_{1/2}}(\eta_t\in \cdot)=\beta_{1/2}$ for all $t\geq 0$, as required.
\end{proof}

There are two natural questions stemming from this simple theorem.

\begin{que}
Is $\beta_{1/2}$ also the unique limiting distribution whenever we start with initial conditions different from the all-$1$ configuration and the all-$0$ configuration?  \hfill $\blacksquare$
\end{que}

\begin{que}
What can be said about the set of invariant distributions of the symmetric Neuhauser--Pacala model if $\alpha\in (0,1)$? \hfill $\blacksquare$
\end{que}

These questions are closely related to 
the following  conjecture
due to Neuhauser and Pacala~\cite[Conjecture~1]{NP}. 

\begin{conj}\label{conj:NP}
There is coexistence in the symmetric Neuhauser--Pacala model on $\mathbb Z^d$ for any $d\geq 2$. 
\end{conj}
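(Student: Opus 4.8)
The plan is to translate coexistence into survival of the parity dual and then to establish survival of the dual on $\Z^d$ for $d\ge2$. The first point is to isolate the right dual observable. If we start the model from the symmetric product measure $\beta_{1/2}$, then by the $0\leftrightarrow1$ symmetry of the symmetric model (and of $\beta_{1/2}$) the one-site marginals are frozen, $\P_{\beta_{1/2}}(\eta_t(x)=1)=1/2$ for all $t$, so coexistence is invisible to singleton sets $B$; instead it is detected by a discordant pair $B=\{x,y\}$, since $\langle\1_{\{x,y\}},\eta\rangle\equiv1$ exactly when $\eta(x)\neq\eta(y)$. Applying the parity duality of Theorem~\ref{thm:PDE} in the form (\ref{beta121}) to $B=\{x,y\}$ gives
\[
\P_{\beta_{1/2}}\big(\eta_t(x)\neq\eta_t(y)\big)=\tfrac12\,\P\big(\widehat\eta^{\{x,y\}}_t\neq\mathbf 0\big).
\]
The relevant trivial limit is the mixture $\tfrac12\delta_{\mathbf 0}+\tfrac12\delta_{\mathbf 1}$, which is the unique translation-invariant symmetric law carrying zero two-point discordance (Proposition~\ref{prop:measure} guarantees that the parity events determine the limit at all); hence a genuinely coexisting stationary law exists iff the limit of $\eta_t$ is not this mixture, i.e. iff
\[
\liminf_{t\to\infty}\P\big(\widehat\eta^{\{x,y\}}_t\neq\mathbf 0\big)>0.
\]
A positive liminf produces, by weak-$*$ compactness applied to the Cesàro averages of the law of $\eta_t$, a translation-invariant stationary measure with positive discordance density, which is coexistence. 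So the whole problem reduces to showing that the two-particle parity dual survives.

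The second point is to analyse $(\widehat\eta^{\{x,y\}}_t)$ as a branching--annihilating random walk: by (\ref{ann-2}) and (\ref{voter-2}) its particles branch (one particle at $x$ seeds two at chosen neighbours) at total rate proportional to $1-\alpha$ and take annihilating random-walk steps at rate proportional to $\alpha$, with site occupation always read modulo $2$. In high dimension, or for sufficiently spread-out kernels $q$, I would prove survival by a perturbation argument: branching creates offspring, while in a transient environment distinct particles meet only rarely, so the annihilation terms form a small correction and the dual stochastically dominates a supercritical branching random walk, which survives. This mean-field-flavoured regime should be the tractable part, and it already covers $\alpha$ close to $1$ in $d\ge3$, where even the pure annihilating random walk started from two particles survives by transience.

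For the full range $d\ge2$ and $\alpha\in[0,1)$ I would attempt a block (renormalisation) construction, comparing the dual observed on a coarse space--time grid with supercritical oriented percolation, in the spirit of Durrett's comparison method and of Bramson--Durrett--Swindle for branching--annihilating walks: one shows that a block carrying enough dual particles reseeds, with probability close to $1$, enough particles into the forward neighbouring blocks, after which supercriticality of oriented percolation yields survival with positive probability. The decisive obstacle is the pairwise annihilation itself: it destroys the monotonicity/attractiveness on which block constructions for voter-type systems rely, so instead of a monotone coupling one must control the cancellations probabilistically inside each block and show that a linearly growing cloud of particles cannot be annihilated away. This is hardest when $d=2$, where the walk part is recurrent so two dual particles are pushed together and tend to annihilate, and the branching rate $1-\alpha$ that must overcome this can be arbitrarily small as $\alpha\uparrow1$; here the block estimates sit right at criticality, and I expect this borderline two-dimensional, weak-branching regime to be the main difficulty standing between this attack and a full proof.
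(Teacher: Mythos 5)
You should first be clear about the status of this statement: it is Conjecture~\ref{conj:NP}, and the paper contains \emph{no proof} of it. The authors explicitly write that, apart from the partial results in \cite{NP,CP:Rescaled,CMP,CP:Coexistence,SS}, ``the full resolution of Conjecture~\ref{conj:NP} remains open.'' So there is no proof in the paper to match your attempt against, and your proposal must be judged as a research program. Its first half is sound in outline and is in fact exactly the paper's own point of view: applying (\ref{beta121}) with $B=\{x,y\}$ gives
\begin{align*}
\P_{\beta_{1/2}}\big(\eta_t(x)\neq \eta_t(y)\big)=\tfrac{1}{2}\,\P\big(\widehat{\eta}^{\{x,y\}}_t\neq \mathbf 0\big),
\end{align*}
and extracting a translation-invariant stationary law with positive discordance from Ces\`aro averages (using the Feller property and the continuity of local parity functions, with Proposition~\ref{prop:measure} identifying limits) is the standard reduction; it parallels Theorem~\ref{thm:PDE}, Corollary~\ref{cor:EVUG}, and the closing discussion of the methods of \cite{SS,ABP,NP}. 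Note that along this route, started from $\beta_{1/2}$, you only need dual survival, not the extinction-versus-unbounded-growth condition (\ref{EVUG}), which enters for other initial laws.

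The second half, however, contains a genuine gap and one step that would fail as stated. The dual defined by (\ref{ann-2})--(\ref{voter-3}) is non-monotone, so your claim that it ``stochastically dominates a supercritical branching random walk'' cannot hold: pairwise annihilation strictly removes particles, and no coupling can realize such a domination; at best one can hope for a comparison at the level of coarse-grained block events, which is precisely the unproved hard step (and the relevant comparison technology is Bramson--Ding--Durrett \cite{ABP} together with \cite{Durrett,Durrett_note}, not a pathwise domination). You also conflate two perturbative regimes: near $\alpha=0$ annihilation events are rare and a near-critical-BRW heuristic is plausible, but near $\alpha=1$ the small parameter is the branching rate $1-\alpha$, and survival of the \emph{two-particle} annihilating walk by transience in $d\geq 3$ does not transfer to the perturbed process by any monotonicity, since newly branched particles can return and annihilate the originals; so the claim that this regime is ``already covered'' is unjustified. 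Finally, you concede the $d=2$, weak-branching regime outright. In short, your proposal correctly reproduces the duality framework the paper advocates, but it does not close the conjecture --- consistent with the paper's own assessment that the problem is open.
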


If $\alpha>0$ and $q(x,y)$ defines the nearest-neighbor random walk on $\Z$, coexistence in the sense that one can see the populations of both types occupy a fixed region after large times fails with probability one, and hence, clustering holds. 
See \cite[Theorem~2 (b)]{NP} for the precise statement. On the other hand, in view of the solution (\ref{p0*}) under the classical Lotka--Volterra model and the convergence of the symmetric Neuhauser--Pacala model 
on large complete graphs (recall the discussion below (\ref{def:flip})), it seems reasonable to expect that a mean-field phenomenon should be valid under these stochastic spatial generalizations in general. Coexistence in suitable notions should hold except in very few, if not pathological, cases. Some resolutions of Conjecture~\ref{conj:NP} are achieved in \cite{CP:Rescaled,CMP,CP:Coexistence,SS}, since \cite[Theorem~1]{NP} takes the first few steps to do so. 
Yet to the knowledge of the authors, the full resolution of Conjecture~\ref{conj:NP} remains open.

The role of $1/2$ in equilibria can be illustrated more generally by the following theorem. 

\begin{thm}\label{thm:equiparity1}
Suppose that $(\eta_t)$ has initial condition given by $\beta_{u}$ for $u\in (0,1)$ and that along some subsequence $(t_k)$ tending to infinity,
\begin{linenomath*}\begin{align}\label{def:ZB}
&\langle \widehat{\eta}^{B}_{t_k},\1\rangle\xrightarrow[t_k\to\infty]{{\rm (d)}}Z_B\quad \mbox{in}\;  \mathbb Z_+\cup \{+\infty\}
\end{align}\end{linenomath*}
for some random variable $Z_B$. Then
\begin{linenomath*}\begin{align*}
\P_{\beta_u}\big(\langle \1_B, \eta_{t_k}\rangle\equiv 1\big)\xrightarrow[t_k\to\infty]{} \frac{1}{2}\E\left[1-(1-2u)^{Z_B}\right]
\end{align*}\end{linenomath*}
with the convention $0^0=1$ when $u=1/2$. 
\end{thm}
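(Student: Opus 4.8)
The plan is to obtain, at each fixed time $t$, an exact closed-form expression for $\P_{\beta_u}(\langle \1_B,\eta_t\rangle\equiv 1)$, and then to pass to the limit along $(t_k)$ using the weak-convergence hypothesis (\ref{def:ZB}). The first step is to generalize the computation behind (\ref{beta12}) from $u=1/2$ to arbitrary $u\in(0,1)$. Working in Harris's graphical representation, the pathwise identity $\langle \1_B,\eta^A_t\rangle=\langle \widehat\eta^{t,B}_t,\1_A\rangle$ is linear in the initial condition over $\Z_2$, so for a random initial configuration $\eta_0\sim\beta_u$ (independent of the graphical data) one has $\langle \1_B,\eta_t\rangle\equiv\langle \widehat\eta^{t,B}_t,\eta_0\rangle\pmod 2$. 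Since the dual $\widehat\eta^{t,B}_t$ has finite support almost surely (as recorded below (\ref{Poisson-finite})), the inner product on the right is a finite sum; conditioning on $\widehat\eta^{t,B}_t$ and using the product structure of $\beta_u$, the spins of $\eta_0$ on that support are i.i.d. Bernoulli$(u)$. Hence, with $X_1,X_2,\dots$ i.i.d. Bernoulli$(u)$ independent of the dual and using $\widehat\eta^{t,B}_t\stackrel{\rm (d)}{=}\widehat\eta^B_t$,
\begin{align*}
\P_{\beta_u}(\langle \1_B,\eta_t\rangle\equiv 1)=\P\Big(\sum_{m=1}^{\langle \widehat\eta^{B}_t,\1\rangle}X_m\equiv 1\Big).
\end{align*}

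The second step is to evaluate the odd-sum probability. For a fixed integer $n$, the character identity $\E[(-1)^{X_1+\cdots+X_n}]=(1-2u)^n$ together with $\E[(-1)^{X_1+\cdots+X_n}]=1-2\P(\sum_{m=1}^n X_m\equiv 1)$ gives $\P(\sum_{m=1}^n X_m\equiv 1)=\tfrac12\big(1-(1-2u)^n\big)$. Conditioning on the value of $\langle \widehat\eta^B_t,\1\rangle$ then yields the exact identity
\begin{align*}
\P_{\beta_u}(\langle \1_B,\eta_t\rangle\equiv 1)=\frac12\,\E\big[1-(1-2u)^{\langle \widehat\eta^B_t,\1\rangle}\big],
\end{align*}
valid for every $t\ge 0$, where the empty sum ($n=0$) is even and correspondingly $(1-2u)^0=1$.

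The third and final step is the limit along $(t_k)$. I would set $g(n)=1-(1-2u)^n$ for $n\in\Z_+$, with $g(0)=0$ under the convention $(1-2u)^0=1$ (which reads as $0^0=1$ when $u=1/2$), and put $g(+\infty)=1$. Because $u\in(0,1)$ forces $|1-2u|<1$, the function $g$ is bounded and satisfies $g(n)\to 1=g(+\infty)$ as $n\to\infty$; thus $g$ is a bounded \emph{continuous} function on the one-point compactification $\Z_+\cup\{+\infty\}$. Since (\ref{def:ZB}) is precisely weak convergence on this compact space, the Portmanteau theorem gives $\E[g(\langle \widehat\eta^B_{t_k},\1\rangle)]\to\E[g(Z_B)]$, which is the asserted limit $\tfrac12\E[1-(1-2u)^{Z_B}]$. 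I expect the one delicate point to be exactly this interchange of limit and expectation: one must verify that $g$ extends \emph{continuously} to $+\infty$ (so that no contribution is lost when $Z_B$ charges $\{+\infty\}$), which is where boundedness of $g$ and the strict inequality $|1-2u|<1$ for $u\in(0,1)$ are used, while the convention $0^0=1$ at $u=1/2$ is what keeps $g(0)=0$ consistent with the empty sum being even.
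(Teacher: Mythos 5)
Your proposal is correct and follows essentially the same route as the paper: parity duality with the random initial configuration $\beta_u$ reduces the problem to the odd-parity probability of a Bernoulli$(u)$ sum over the dual's support, the character identity $\E[(-1)^{\sum_m X_m}]=(1-2u)^n$ reproduces the paper's parity deviation lemma (Lemma~\ref{lem:equiparity}), and the limit is taken along $(t_k)$ using \eqref{def:ZB}. Your third step merely makes explicit what the paper leaves implicit, namely that $n\mapsto 1-(1-2u)^n$ extends to a bounded continuous function on the one-point compactification $\Z_+\cup\{+\infty\}$ because $|1-2u|<1$, so weak convergence suffices.
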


This theorem is a generalization of what we discuss above for the case $u=1/2$ in Theorem~\ref{thm:1}. 
The following corollary is immediate from Theorem~\ref{thm:equiparity1}.

\begin{cor}\label{cor:EVUG}
Suppose that (\ref{def:ZB}) holds and $u\neq 1/2$.
Then $\P_{\beta_u}\big(\langle \1_B, \eta_{t_k}\rangle\equiv 1\big)\to \tfrac{1}{2}$ if and only if 
the dual process survives:
\[
\lim_{t_k\to\infty}\P(\langle \widehat{\eta}^B_{t_k},\1\rangle\geq 1)=1
\]
and there is {\bf extinction versus unbounded growth}  in the dual process:
\begin{align}\label{EVUG}
\lim_{t_k\to\infty}\P(1\leq \langle \widehat{\eta}^B_{t_k},\1\rangle \leq L)=0,\quad\forall\; L\in \mathbb N.
\end{align}
\end{cor}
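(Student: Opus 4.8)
The plan is to read everything off the limit formula supplied by Theorem~\ref{thm:equiparity1} and then to convert the two dual-process conditions into a single statement about the law of $Z_B$. First I would record that Theorem~\ref{thm:equiparity1} gives
\[
\lim_{t_k\to\infty}\P_{\beta_u}\big(\langle \1_B,\eta_{t_k}\rangle\equiv 1\big)=\tfrac12\E\big[1-(1-2u)^{Z_B}\big],
\]
so that the left-hand limit equals $\tfrac12$ if and only if $\E[(1-2u)^{Z_B}]=0$. Writing $\theta:=1-2u$, the hypotheses $u\in(0,1)$ and $u\neq\tfrac12$ give $\theta\in(-1,1)\setminus\{0\}$, and with the convention $\theta^{+\infty}=0$ (legitimate since $|\theta|<1$) the quantity to control is $\E[\theta^{Z_B}]=\sum_{n\geq 0}\theta^{n}\,\P(Z_B=n)$.

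Next I would translate the two conditions on the right-hand side of the corollary into properties of $Z_B$ by means of the convergence in distribution (\ref{def:ZB}). Since $\Z_+\cup\{+\infty\}$ carries its one-point-compactification topology, each finite singleton $\{n\}$ and each finite set $\{1,\dots,L\}$ is clopen, so (\ref{def:ZB}) yields $\P(\langle\widehat\eta^B_{t_k},\1\rangle=0)\to\P(Z_B=0)$ and $\P(1\le\langle\widehat\eta^B_{t_k},\1\rangle\le L)\to\P(1\le Z_B\le L)$. Hence the survival condition is equivalent to $\P(Z_B=0)=0$, and the extinction-versus-unbounded-growth condition (\ref{EVUG}) is equivalent to $\P(Z_B=n)=0$ for every finite $n\ge 1$. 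Together these two say exactly that $Z_B=+\infty$ almost surely, so the corollary is reduced to the equivalence $\E[\theta^{Z_B}]=0\iff \P(Z_B=+\infty)=1$.

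The \emph{if} direction is immediate: if $Z_B=+\infty$ a.s.\ then $\theta^{Z_B}=0$ a.s.\ because $|\theta|<1$, whence $\E[\theta^{Z_B}]=0$. The content is the converse, and this is where the main obstacle lies: when $u>\tfrac12$ one has $\theta<0$, so the terms $\theta^{n}\P(Z_B=n)$ alternate in sign and $\E[\theta^{Z_B}]=0$ does not by itself force the finite-$n$ probabilities to vanish. The device that removes this obstacle is a parity invariant of the dual process. Each elementary dual transition, namely the branching-with-annihilation move (\ref{ann-2}) and the random-walk-with-annihilation move (\ref{voter-2}), adds to $\widehat\eta$ a $\Z_2$-vector of weight $0$ or $2$, so it changes the number of occupied sites by an even integer; this is precisely what the labels $-2,0,+2$ in the proof of Theorem~\ref{thm:1} record for the first mechanism, and the analogous bookkeeping gives $0,-2$ for the second. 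Consequently $\langle\widehat\eta^B_t,\1\rangle\equiv|B|\pmod 2$ for all $t$, so the finite part of the support of $Z_B$ lies in a single residue class modulo $2$.

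Finally I would exploit this to run the converse. Because every finite $n$ in the support of $Z_B$ shares the parity of $|B|$, the numbers $\theta^{n}$ for those $n$ all have the same sign, namely $(\sgn\theta)^{|B|}$. Thus $\sum_{n<\infty}\theta^{n}\P(Z_B=n)$ is a sum of terms of one fixed sign, and its vanishing forces $\P(Z_B=n)=0$ for every finite $n$; equivalently $Z_B=+\infty$ a.s. Combined with the translation above, this recovers both the survival condition and (\ref{EVUG}) and completes the equivalence. The only genuinely delicate point, which I would state carefully, is the parity invariant; everything else is routine bookkeeping with the limit formula and with convergence in distribution on the compactified state space.
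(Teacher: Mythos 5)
Your proposal is correct, and it is worth noting that it actually supplies more than the paper does: the paper offers no written proof of Corollary~3.6, declaring it ``immediate from Theorem~3.5.'' Your reduction is exactly the intended one --- via Theorem~\ref{thm:equiparity1} the limit equals $\tfrac12$ iff $\E[(1-2u)^{Z_B}]=0$, and the portmanteau step on the compactified space $\Z_+\cup\{+\infty\}$ (where finite singletons are clopen, hence automatic continuity sets) correctly converts survival and \eqref{EVUG} into $\P(Z_B=n)=0$ for all finite $n$. But you have put your finger on the one point where ``immediate'' is too quick: for $u>\tfrac12$ the series $\sum_n (1-2u)^n\,\P(Z_B=n)$ has alternating-sign terms, and without further input its vanishing does not force $Z_B=+\infty$ a.s.\ (e.g.\ $\theta=-\tfrac12$, $\P(Z_B=0)=p$, $\P(Z_B=1)=2p$ gives cancellation while survival fails). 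Your parity invariant $\langle\widehat\eta^B_t,\1\rangle\equiv|B|\pmod 2$ is the correct repair: both dual mechanisms \eqref{ann-2} and \eqref{voter-2} add a $\Z_2$-vector of weight $0$ or $2$ (this is the $-2,0,+2$ bookkeeping in the paper's proof of Theorem~\ref{thm:1}, plus $0,-2$ for the walk-with-annihilation move, and the degenerate $y=z$ and $\xi(x)=0$ cases are trivial updates), so the finite part of the support of $Z_B$ sits in one residue class mod $2$, all surviving terms $\theta^n$ share the sign $(\sgn\theta)^{|B|}$, and the one-signed sum vanishes only termwise. This is a genuine strengthening of the paper's presentation rather than a deviation from it; the only cosmetic remark is that the case $u<\tfrac12$ never needs parity at all, since then every term is already nonnegative.
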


The notion of extinction versus unbounded growth is introduced by Sturm and Swart in~\cite{SS}. We will discuss this notion in more detail later on.

The proof of Theorem~\ref{thm:equiparity1} is an application of the parity duality and the following lemma. 

\begin{lem}[Parity deviation]\label{lem:equiparity}
Let $X_1,X_2,\cdots, X_N$ be independent $\mathbb Z_+$-valued random variables with
$\P(X_m\equiv 1)=u_m$.
Then it holds that
\begin{linenomath*}\begin{align*}
\P\left(\sum_{m=1}^NX_m\equiv 0\right)-\P\left(\sum_{m=1}^NX_m\equiv 1\right)=\prod_{m=1}^N(1-2u_m).
\end{align*}\end{linenomath*}
\end{lem}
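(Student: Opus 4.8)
The plan is to linearize the parity by means of the character $x\mapsto(-1)^x$, whose expectation over a $\mathbb Z_+$-valued random variable measures exactly the difference between its even- and odd-parity probabilities. The decisive feature is that $(-1)^x$ depends only on $x\bmod 2$, takes values in $\{-1,+1\}$, and converts the additive parity of a sum into a multiplicative quantity, so that independence of the summands turns into a product.

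First I would record the single-variable computation: since each $X_m$ is $\mathbb Z_+$-valued, the events $\{X_m\equiv 0\}$ and $\{X_m\equiv 1\}$ partition the sample space, and hence
\[
\E\big[(-1)^{X_m}\big]=\P(X_m\equiv 0)-\P(X_m\equiv 1)=(1-u_m)-u_m=1-2u_m.
\]
Next, writing $S=\sum_{m=1}^N X_m$, the key algebraic identity is $(-1)^{S}=\prod_{m=1}^N(-1)^{X_m}$, which follows from $(-1)^{a+b}=(-1)^a(-1)^b$. Because $X_1,\dots,X_N$ are independent and each $(-1)^{X_m}$ is bounded, the expectation factorizes:
\[
\E\big[(-1)^{S}\big]=\prod_{m=1}^N\E\big[(-1)^{X_m}\big]=\prod_{m=1}^N(1-2u_m).
\]
Finally, applying the same even/odd decomposition to $S$ gives $\E[(-1)^{S}]=\P(S\equiv 0)-\P(S\equiv 1)$, which is precisely the left-hand side of the claimed identity, so the two displays together finish the argument.

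I do not expect a real obstacle here: the only point requiring care is that $(-1)^{X_m}$ is $\{-1,+1\}$-valued, so all the expectations exist and the factorization over independent factors is legitimate without any integrability hypothesis. One could instead argue by induction on $N$, using that the parity of $X_1+\cdots+X_N$ is obtained by convolving the parity distributions of the summands; however, the character viewpoint above is cleaner and makes the product structure $\prod_m(1-2u_m)$ transparent, so that is the route I would take.
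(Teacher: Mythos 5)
Your proof is correct and is essentially the paper's own argument spelled out in full: the paper's one-line proof writes the left-hand side as $\E\big[(-1)^{\sum_{m=1}^N X_m}\big]$ and implicitly relies on exactly the factorization over independent summands that you make explicit. Nothing to change.
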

\begin{proof}[\bf Proof]
The identity follows upon writing the left-hand side as $\E\big[(-1)^{\sum_{m=1}^NX_m}\big]$.
\end{proof}

\begin{proof}[\bf Proof of Theorem~\ref{thm:equiparity1}]
With a slight abuse of notation, we write also  $\beta_u$ as a random configuration independent of $(\widehat{\eta}_t^B)$. 
By the parity duality in Theorem~\ref{thm:PDE},
we have
\begin{linenomath*}\begin{align*}
\lim_{k\to\infty}\P_{\beta_u}\big(\langle \1_B, \eta_{t_k}\rangle\equiv 1\big)=\lim_{k\to\infty}\P\big(\langle  \widehat{\eta}^B_{t_k}, \beta_u\rangle\equiv 1\big)&=\lim_{k\to\infty}\frac{1}{2}\E\left[1-(1-2u)^{\langle \widehat{\eta}^B_{t_k},\1\rangle}\right]\\
&=\frac{1}{2}\E\left[1-(1-2u)^{Z_B}\right]
\end{align*}\end{linenomath*}
by (\ref{def:ZB}), 
where the second equality follows from Lemma~\ref{lem:equiparity}  by conditioning on $\widehat{\eta}^B_{t_k}$ since 
\[
\P\left(\sum_{m=1}^NX_m\equiv 1\right)=\frac{1}{2}\left[1-\P\left(\sum_{m=1}^NX_m\equiv 0\right)+\P\left(\sum_{m=1}^NX_m\equiv 1\right)\right]=\frac{1}{2}\left(1-\prod_{m=1}^N(1-2u_m)\right).
\] 
\end{proof}

For Theorem~\ref{thm:equiparity1}, further generalizations of the initial conditions of $(\eta_t)$ are possible. 
For example, on a finite set, $(\beta_u)_{0<u<1}$ can explicitly generate uniform distributions over sets of configurations with fixed numbers of $1$'s \cite[pages~655--656]{Chen} and links to distributions defined by exchangeable random variables over sites by de Finetti's representation theorem \cite[Theorem~11.10]{Kal}.
More generally, the proof of Theorem~\ref{thm:equiparity1} can be modified in an obvious way to show a similar formula if we use Bernoulli product measures with site-dependent densities, where the densities $\neq 0,1$ are bounded away from $0$ and $1$.

Theorem~\ref{thm:equiparity1} and Lemma~\ref{lem:equiparity} should provide the basic idea behind the related methods in \cite{ABP,NP,SS}. 
There the method to resolve the main technical issue for not starting with Bernoulli measures is to identify `Bernoulli subsystems' which have sizes growing to infinity as $t\to\infty$. 
The frameworks along this direction in \cite{ABP,NP,SS} can be roughly fit into the following principle,
where we generalize the setting of Lemma~\ref{lem:equiparity} to conditional probabilities in a time-dependent setting. Let $(\F_t)$ be a filtration and $\mathcal I_t,\mathcal B_t$ be $\mathcal F_t$-measurable subsets of $\mathbb N$. 
For each $t\geq 0$, the random variables $X_m$ in Lemma~\ref{lem:equiparity} are now replaced by a collection $\{X_m(t)$; $m\in \mathcal I_t\}$ of $\{0,1\}$-valued random variables. 
We assume that $X_m(t)$, $m\in  \mathcal B_t$, are conditionally independent given $\F_t$ and $\sum_{m\in \mathcal I_t\setminus \mathcal B_t}X_m(t)\in \F_t$. Then Lemma~\ref{lem:equiparity} implies
\begin{linenomath*}\begin{align}
\label{equiparity}
&\quad \;\P\left(\left.\sum_{m\in \mathcal I_t}X_m(t)\equiv 1\right|\F_t\right)\\
& =\P\left(\left.\sum_{m\in \mathcal B_t}X_m(t)\equiv 1-\sum_{m\in \mathcal I_t\setminus\mathcal B_t}X_m(t)\right|\F_t\right)\notag\\
&=\left\{
\begin{array}{ll}
\displaystyle \frac{1}{2}\left(1-\prod_{m\in \mathcal B_t}\big(1-u_m(t)\big)\right),\quad \mbox{ on }\left\{\sum_{m\in \mathcal I_t\setminus\mathcal B_t}X_m(t)\equiv 0\right\},\\
\vspace{-.2cm}\\
\displaystyle \frac{1}{2}\left(1+\prod_{m\in \mathcal B_t}\big(1-u_m(t)\big)\right),\quad \mbox{ on }\left\{\sum_{m\in \mathcal I_t\setminus\mathcal B_t}X_m(t)\equiv 1\right\},
\end{array}
\right.\label{product}
\end{align}\end{linenomath*}
where
\[
u_m(t)=\P(X_m(t)\equiv 1|\F_t).
\]
Hence, to make the conditional probability in (\ref{equiparity}) get close to $1/2$ with high  conditional probability, it is necessary to verify that  with high probability, $|\mathcal B_t|$ is large and $u_m(t)$ are bounded away from $0$ and $1$.

We close the discussion of this section with a very brief account of two applications of (\ref{equiparity})--(\ref{product}) in \cite{ABP,NP,SS}. There, the underlying spaces are integer lattices.

\paragraph{\small \bf Extinction versus unbounded growth.}
The method in \cite[Section~3]{SS} takes the following route. We consider the parity duality in the following form: For fixed $s>0$,
\begin{align}\label{EVUG-2}
\P(\langle \1_B,\eta^A_{s+t}\rangle\equiv 1)=\P(\langle \widehat{\eta}^B_t,\eta^A_s\rangle\equiv 1)=\E\big[\P(\langle \1_C,\eta^A_s\rangle\equiv 1)\big|_{\1_C=\widehat{\eta}^B_t}\big],
\end{align}
where $(\eta^A_s)$ and $(\widehat{\eta}^B_t)$ are independent as above.  
The first step is to show that if there are sufficiently many types of matrices $J$ such that the corresponding Poisson processes with rates $r(J)$ can trigger a change in 
$\langle \1_C,\eta^A_r\rangle$ for $r\in [0,s]$ by \emph{one} occurrence, then  we have
\[
\P(\langle \1_C,\eta^A_s\rangle\equiv 1)\simeq \frac{1}{2}.
\]
These matrices $J$'s are chosen such that $\langle \1_C,(\Id+J)\1_A\rangle=1$, and their ranges are sufficiently far apart so that one can write $\langle \1_C,\eta^A_s\rangle\equiv 1$ as a sum of these inner products and other terms.
Then  the principle in (\ref{equiparity}) comes into play because the Bernoulli variables $X_m(t)$, $m\in\mathcal B_t$,  are defined from conditioning these Poisson processes with rates $r(J)$ to ring at most once by time $s$.

The second step then aims to find the good $A$'s by using appropriate initial conditions and the good $C$'s for (\ref{EVUG-2})
by using the extinction versus unbounded growth condition in Corollary~\ref{cor:EVUG}. (The following initial conditions $\mu$ are enough for the required convergence: $\mu$ is translation invariant and is supported in the set of configurations $\1_A$ where $\eta^A_t$ can be any configuration with positive probability when restricted to an arbitrary finite set for every $t>0$.)
See \cite[Section~3.2]{SS} for these two steps.

The work in \cite{SS} also shows conditions for the extinction versus unbounded growth condition in terms of the survival of $(\eta_t)$ and
a certain recurrence property of $(\widehat{\eta}_t)$ when $\alpha\in (0,1)$ \cite[Sections~3.3 and 3.5]{SS}.  \hfill $\blacksquare$

\paragraph{\small \bf Domination by oriented percolation.}
The method in \cite{NP}, extended from \cite{ABP}, uses the following parity duality:
\begin{align}\label{DOP}
\P(\langle \1_B,\eta^A_{2t}\rangle\equiv 1)=\P(\langle \widehat{\eta}^B_t,\eta^A_t\rangle\equiv 1),
\end{align}
where $(\widehat{\eta}^B_t)$ is independent of $(\eta^A_t)$. ((\ref{DOP}) is a simple consequence of the Markov property of $(\eta^A_t)$ and the parity duality in Theorem~\ref{thm:PDE}.)
We identify configurations with sets of vertices occupied by $1$-individuals.
Then one writes $\langle \widehat{\eta}^B_t,\eta^A_t\rangle=|\widehat{\eta}^B_t\cap \eta^A_t|$ as a sum of the $\{0,1\}$-valued random variables
\[
X_m(t)\equiv |\widehat{\eta}^B_t\cap \eta^A_t\cap E_m|,\quad m\in \mathcal I_t=\mathbb Z,
\]
where $\{E_m\}$ is a partition of the whole space. In this case, the method in \cite{ABP} uses oriented percolations \cite{Durrett, Durrett_note} to show that the products in (\ref{product}) are close to $1/2$ for suitably chosen $\mathcal B_t$. \hfill $\blacksquare$

\section{Interacting diffusions on lattices}\label{sec:interacting}
In this section, we turn to coexistence in the system of interacting Wright-Fisher diffusions 
defined in (\ref{p:SDE}), where from now on we set $N=1$.

In addition to the connection to the Lotka--Volterra model explained in Section~\ref{sec:LV}, the model can be considered by itself as a model for the evolution of gene frequencies in a spatially structured two-type population subject to selection, where $p_t(x)$ describes the proportion of $0$-individuals at site $x\in\Z^d$ and time $t\ge0$. 
In this interpretation, at each site $x$, for $\mu<1$ there is (directional) selection in favor of the $0$-type if $s >0$ and in favor of the $1$-type if $s < 0$. 
 If $\mu>1$, then a type which is locally rare (corresponding to $p_t(x)$ being close to $0$ or $1$) has a selective advantage respectively disadvantage according to whether $s >0$ or $s < 0$, 
 thus for $s >0$ we have selection in favor of heterozygosity and for $s<0$ in favor of homozygosity. (Here, the term `heterozygosity' refers to the degree of genetic variation in the population.) 
It is easy to see that $(1-p_t)$ satisfies \eqref{p:SDE} with $s$ and $\mu$ replaced by $(-s)(1-\mu)$ and $\frac{\mu}{\mu-1}$, respectively.
In particular, if $\mu=2$, then $(1-p_t)$ follows the same dynamics as $(p_t)$, thus the two types evolve symmetrically.
Also note that for the neutral case $s=0$, the system \eqref{p:SDE} reduces to the well-known stepping stone model, see \cite{Shiga}.

It is natural to conjecture that long-term coexistence of the two types is possible in the heterozygosity (`balancing') selection case, at least if $s$ is large enough.
Indeed, based on comparison arguments with oriented percolation, in \cite[Theorem~1.4]{BEM} a coexistence result in the following sense is proven: 
for $\mu>1$ and fixed small $\vep>0$, there exists $s_0\ge0$ such that for all $s>s_0$ and all initial conditions $p_0$ with $p_0(x)\in(\varepsilon, 1-\varepsilon)$ for all $x\in\Z^d$, we have
\[
\liminf_{t\to\infty}\mathbb{P}\left(\varepsilon<p_t(x)<1-\varepsilon\right)>0,\quad\forall\;x\in \Z^d.
\]
Note that \cite[Theorem~1.4]{BEM} 
does not claim that coexistence fails for $s<s_0$, nor does it specify the value of $s_0$.
But in fact, one would conjecture the following (see \cite[Conjectures 2.2 and 2.3]{BEM}):
\begin{conj}\label{conj:BEM}
Suppose $\mu>1$. There exists a critical value $s_0\ge0$ such that we have coexistence for $s>s_0$ and non-coexistence for $s<s_0$. 
\end{conj}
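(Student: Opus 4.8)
The plan is to recognize that Conjecture~\ref{conj:BEM} is, at bottom, a \emph{monotonicity} statement: once one knows that the set of parameters for which (\ref{p:SDE}) exhibits coexistence is an up-set of the form $(s_0,\infty)$ or $[s_0,\infty)$, the critical value is simply $s_0:=\inf\{s:\text{coexistence holds}\}$, and the conjecture reduces to $0\le s_0<\infty$. I would therefore isolate three tasks: (i) fix a robust notion of coexistence; (ii) prove that coexistence is monotone nondecreasing in $s$; and (iii) pin $s_0$ between $0$ and $\infty$. For (i) I would call the system coexistent (for given $\mu,s$) if, started from a translation-invariant law whose marginals are supported in $(0,1)$, the laws of $p_t$ admit a translation-invariant subsequential weak limit assigning positive mass to $\{p(x)\in(\vep,1-\vep)\}$ for some $\vep>0$, equivalently a nontrivial stationary measure not supported on the two monomorphic states. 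Task (iii) is the easy pair of endpoints: coexistence for all large $s$ is exactly \cite[Theorem~1.4]{BEM}, giving $s_0<\infty$; and for $s<0$ with $\mu>1$ the drift $sp(1-p)(1-\mu p)$ in (\ref{p:SDE}) is of homozygosity (fixation-promoting) type at both boundaries, so one expects a clustering argument to yield non-coexistence and hence $s_0\ge 0$, though making this uniform over $s<0$ (in particular in the transient dimensions $d\ge 3$, where the neutral model already coexists, forcing $s_0=0$) is itself nontrivial.

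The heart of the matter is task (ii), where I expect the decisive obstacle. The obvious route --- coupling two solutions of (\ref{p:SDE}) with $s<s'$ driven by the same noise and showing $p_t^{(s)}\le p_t^{(s')}$ --- is doomed: the drift difference $(s'-s)p(1-p)(1-\mu p)$ changes sign at $p=1/\mu$, so the frequency system is not attractive and no order-preserving coupling exists. I would therefore go through duality in the form (\ref{analyticdual}), as advertised for the interacting diffusions. Taking the dual function $H(p,\widehat\eta)=\prod_x(1-p(x))^{\widehat\eta(x)}$ and carrying out the generator computation (the analogue for (\ref{p:SDE}) of the algebraic verification done for the Neuhauser--Pacala model), one is led to a dual particle system $(\widehat\eta_t)$ that migrates according to $m$, coalesces at a rate coming from the Wright--Fisher noise, and branches at a rate proportional to $s$ coming from the selection term. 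The aim would then be to (a) translate coexistence of (\ref{p:SDE}) into survival or non-triviality of $(\widehat\eta_t)$, in the spirit of Corollary~\ref{cor:EVUG}, and (b) prove that this survival event is monotone in the branching rate, hence in $s$, by a graphical-representation coupling of the dual systems.

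The difficulty is that this dual is not a clean monotone particle system for general $\mu$. Expanding the selection drift against $(1-p)^n$ produces terms of mixed sign --- schematically, branching into one extra particle enters with coefficient $-s(2\mu-1)$ while branching into two enters with $+s\mu$ --- so the naive dual has a negative transition rate and is not literally a branching-coalescing random walk with positive rates. Resolving this is the crux: one must either find a genuinely probabilistic (positive-rate) reformulation of the dual, or work with a signed duality and control the offending term, before any attractiveness and monotonicity-in-$s$ argument can begin. I would first attack the symmetric case $\mu=2$, where the symmetry noted below (\ref{p:SDE}), namely that $1-p_t$ solves the same equation, should simplify the dual and make both the correspondence between coexistence and dual survival and its monotonicity in $s$ most tractable, giving a template for general $\mu>1$. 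Even granting a clean dual, showing that the survival probability of a branching-coalescing system is monotone in the branching rate is delicate, since raising the branching rate also increases the particle count and hence the coalescence pressure; the standard remedy is to establish attractiveness of the dual via its graphical representation and couple the two rates on a common percolation structure, but proving attractiveness in the presence of coalescence is precisely where I expect the main technical work --- and the reason the conjecture remains open.
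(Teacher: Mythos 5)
There is no proof in the paper to compare against: the statement you were handed is Conjecture~\ref{conj:BEM}, and the authors say explicitly that its full resolution remains open. So no blind attempt could be ``correct'' here, and yours does not claim to be --- it is a research program whose load-bearing steps you yourself flag as unproven. That said, your plan tracks the paper's actual development closely, and your computations are right: the reduction of coexistence to survival of the dual is exactly Proposition~\ref{prop:equivalence} (for $\mu=2$), the endpoint $s_0<\infty$ is \cite[Theorem~1.4]{BEM}, and your signed-coefficient expansion of the selection drift against $\prod_x(1-p(x))^{\widehat\eta(x)}$ --- one-particle branching entering with coefficient $-s(2\mu-1)$ and two-particle branching with $+s\mu$ --- is correct, and is precisely why the paper obtains a genuine positive-rate dual only after the transformation $\sigma_t=1-2p_t$ at $\mu=2$ (Lemma~\ref{lem:dual}, yielding the DBARW with branching rate $s/2$; the point being that moments of $\sigma\in[-1,1]$ may be negative, which absorbs your offending sign) or in the disjoint regime $\mu\in[-1,0]$, $s\le0$ (Lemma~\ref{lem:BCRW}). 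You anticipated that $\mu=2$ should simplify matters but did not supply the transformation; Proposition~\ref{prop:equivalence} also already provides the initial-condition robustness needed to make your ``coexistence region in $s$'' well defined, though again only at $\mu=2$.

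The genuine gaps are the ones you name, and they are worth stating sharply, because they are the reason the conjecture is open. First, monotonicity in $s$ of DBARW survival is not a routine attractiveness argument waiting to be written down: the DBARW is parity-preserving and non-monotone, the annihilation rate is quadratic in local occupancy while branching is linear, so increasing $s$ feeds the very interactions that kill particles; no survival-preserving graphical coupling between two branching rates is known, and the extinction-versus-unbounded-growth machinery of \cite{SS} exists precisely because monotone tools fail for such systems. Second, even granting monotonicity you would get a sharp threshold only at $\mu=2$; Conjecture~\ref{conj:BEM} is asserted for all $\mu>1$, where no controlled dual (positive-rate or signed) is available, and your proposal offers no concrete resolution of the negative-rate term beyond the $\mu=2$ template. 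Third, the endpoint $s_0\ge0$ is likewise uncovered: Proposition~\ref{prop:extinction} requires $\mu\in[-1,0]$ and $s<0$, not $\mu>1$, and your appeal to a ``clustering argument'' for $s<0$, $\mu>1$ is a placeholder, especially in $d\ge3$ where the neutral system already has nontrivial stationary behavior --- consistent with the conjecture recorded in the paper (following \cite{BEM}) that $s_0=0$ for $d\ge2$ and $s_0>0$ for $d=1$. In short, your skeleton (the coexistence region is an up-set in $s$ and $s_0$ is its infimum) is the right way to organize the problem and agrees with the paper's discussion, but the proposal leaves the conjecture exactly as open as the paper does.
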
 

As in Conjecture~\ref{conj:NP}, the full resolution of this conjecture is still open to the knowledge of the authors.

Our discussion below will continue the point of view from Section~\ref{sec:spin} and relate coexistence to the question of survival for a suitable dual process.  
This time, the duality we can use 
is a {{\bf moment duality} extended from the well-known stepping stone model \cite{Shiga}. 
It requires a bit of algebra to see and, unfortunately, seems to be available for restricted values of the parameters $s$ and $\mu$ only.
In the `symmetric' case $\mu=2$, the moment dual is a {\bf branching annihilating random walk}, which is a non-monotone system and difficult to analyze mathematically. 
Based on non-rigorous results in the physics literature, \cite{BEM} actually conjectures the value of $s_0$ in Conjecture \ref{conj:BEM} to be equal to $0$ in dimensions $d\ge2$ and to be strictly positive in $d=1$.

\subsection{Duality and coexistence}\label{sec:dc}
We start with the particular case $s=0$, the stepping stone model, for which there is a well-known moment duality due to Shiga \cite{Shiga_86} with a system of coalescing random walks $(\xi_t)=(\xi_t(x); x\in\Z^d)$ on $\mathbb Z_+^{\Z^d}$ defined with the following rates:
\begin{linenomath*}\begin{align}\label{def:crw}
\begin{split}
&\mbox{migration:}\left\{
\begin{array}{ll}
\xi(x)\to \xi(x)-1\\
\vspace{-.4cm}\\
\xi(y)\to \xi(y)+1
\end{array}
\right.\mbox{ with rate }\xi(x)m_{xy},\\
&\hspace{.1cm}\mbox{coalescence: } \xi(x)\to \xi(x)-1\quad \mbox{ with rate }{\frac{\xi(x)[\xi(x)-1]}{2}}.
\end{split}
\end{align}\end{linenomath*}
The duality reads as follows: for each $p_0\in[0,1]^{\Z^d}$ and initial condition $\xi_0\in\mathbb Z_+^{\Z^d}$ such that $\sum_{x\in\Z^d}\xi_0(x)<\infty$, we have
\begin{linenomath*}\begin{align}\label{eq:moment_duality_coal}
\E\left[\prod_{x\in\Z^d}p_t(x)^{\xi_0(x)}\right]=\E\left[\prod_{x\in\Z^d}p_0(x)^{\xi_t(x)}\right].
\end{align}\end{linenomath*}
The migration mechanism in (\ref{def:crw}) shows that individuals at site $x$ can migrate to site $y$ with rate $m_{xy}$,
whereas the coalescence mechanism shows that only two of the individuals at site $x$ coalesce at a time. 

The moment duality (\ref{eq:moment_duality_coal}) can be used to quickly settle the question whether typically the underlying two types of individuals can coexist when the migration matrix $m$ is such that starting from finitely many particles, the process $(\xi_t)$ will almost surely end up with a single particle due to coalescence, that is 
\[
 |\xi_t|\stackrel{\rm def}{=}\sum_{x\in \Z^d}\xi_t(x)\xrightarrow[t\to\infty]{\rm a.s.}1.
\]
In this case, if we consider the homogenous initial condition $p_0(x)=1/2$ for all $x\in\Z^d$, then the duality \eqref{eq:moment_duality_coal} gives
convergence of all mixed moments:
\[
\E\left[\prod_{x\in\Z^d}p_t(x)^{\xi_0(x)}\right]\xrightarrow[t\to\infty]{}\frac{1}{2},\quad\forall\; \xi_0\in \Z_+^{\mathbb Z^d} \mbox{ with }\sum_{x\in \Z^d}\xi_0(x)<\infty,
\]
which implies 
\[
p_t\xrightarrow[t\to\infty]{{\rm (d)}}\tfrac{1}{2}\delta_{\mathbf 1}+\tfrac{1}{2}\delta_{\mathbf 0}.
\]
The two types cannot coexist. 

For the case $s>0$ and $\mu= 2$, the key idea from \cite{BEM} is to consider the transformed process: 
\[
\sigma_t\stackrel{\rm def}{=} 1- 2 p_t\in [-1,1]. 
\]
It satisfies the following system of SDEs:
\begin{linenomath*}\begin{align}
\label{eq:transformed_process}
 d\sigma_t(x)=\sum_{y\in \Z^d}m_{xy}\big(\sigma_t(y)-\sigma_t(x)\big)dt + \frac{s}{2} \big(\sigma_t(x)^3-\sigma_t(x)\big)dt - \sqrt{1-\sigma_t(x)^2}\,dW_t(x),\quad x\in \Z^d.
 \end{align}\end{linenomath*}
It is stated
in \cite[Lemma~2.1]{BEM} that the moment duality in (\ref{eq:moment_duality_coal}) still applies for $\sigma_t$ in place of $p_t$, but instead of a coalescing random walk, the dual process is now a {\bf branching annihilating random walk}.

\begin{defi}\label{def:DBARW}
Fix $s\geq 0$. Define a Markov process $(\xi_t)$ taking values $\xi_t\in \Z_+^{\Z^d}$ starting from finitely many particles at time zero (that is, $\sum_{x\in\Z^d}\xi_0(x)<\infty$) and with the following transition rates: 
\begin{linenomath*}\begin{align}\label{def:dbarw}
\begin{split}
&\mbox{migration:}\left\{
\begin{array}{ll}
\xi(x)\to \xi(x)-1\\
\vspace{-.4cm}\\
\xi(y)\to \xi(y)+1
\end{array}
\right.\mbox{ with rate }\xi(x)m_{xy},\\
&\hspace{-.9cm}\mbox{double branching: } \xi(x)\to \xi(x)+2\quad \mbox{ with rate }s\xi(x),
\\
&\hspace{.01cm}\mbox{annihilation: } \xi(x)\to \xi(x)-2\quad \mbox{ with rate }{\frac{\xi(x)[\xi(x)-1]}{2}}.
\end{split}
\end{align}\end{linenomath*}
This process is called a {\bf double branching annihilating random walk} (DBARW) with branching rate $s$. 
\end{defi}

\begin{lem}[\cite{BEM}]\label{lem:dual}
Fix $\mu= 2$ and $s\geq 0$. 
Then we have the following moment duality between the transformed process $(\sigma_t)$ defined by (\ref{eq:transformed_process}) and the DBARW 
$(\xi_t)$ from Definition~\ref{def:DBARW} with branching rate $s/2$: For all $\sigma_0\in [-1,1]^{\mathbb Z^d}$ and $\xi_0\in \Z_+^{\Z^d}$ with $\sum_{x\in\Z^d}\xi_0(x)<\infty$, 
\begin{align}\label{eq:momdual}\E_{\sigma_0}\left[\prod_{x\in \Z^d}\sigma_t(x)^{\xi_0(x)}\right]=\E_{\xi_0}\left[\prod_{x\in \Z^d} \sigma_0(x)^{\xi_t(x)}\right].\end{align}
\end{lem}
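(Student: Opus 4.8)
The plan is to prove \eqref{eq:momdual} by the generator (Feynman--Kac) method already advertised in Section~\ref{sec:spin}: I would verify that the bivariate function
\[
H(\sigma,\xi)=\prod_{x\in\Z^d}\sigma(x)^{\xi(x)}
\]
is a duality function intertwining the generator $L^\sigma$ of the diffusion \eqref{eq:transformed_process} with the generator $\mathcal L^\xi$ of the DBARW of Definition~\ref{def:DBARW} (branching rate $s/2$), in the sense that $L^\sigma H(\cdot,\xi)(\sigma)=\mathcal L^\xi H(\sigma,\cdot)(\xi)$ for all admissible $(\sigma,\xi)$. Since $|\sigma(x)|\le 1$, we have $|H|\le 1$, which keeps all integrability bookkeeping routine.

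First I would compute $L^\sigma H(\cdot,\xi)$ by applying It\^o's formula to $H$ through \eqref{eq:transformed_process}. Because the driving Brownian motions $\{W(x)\}$ are independent, only the diagonal second-order terms survive, giving
\[
L^\sigma H=\sum_{x}\Big[\Big(\sum_y m_{xy}\big(\sigma(y)-\sigma(x)\big)+\tfrac{s}{2}\big(\sigma(x)^3-\sigma(x)\big)\Big)\partial_{\sigma(x)}H+\tfrac12\big(1-\sigma(x)^2\big)\partial_{\sigma(x)}^2 H\Big].
\]
The heart of the matter is then three elementary ladder identities, which I would phrase so as to avoid any division by $\sigma(x)$: writing $\delta_x$ for the unit mass at $x$, one has $\partial_{\sigma(x)}H=\xi(x)\,H(\sigma,\xi-\delta_x)$ and $\partial_{\sigma(x)}^2 H=\xi(x)(\xi(x)-1)\,H(\sigma,\xi-2\delta_x)$ as polynomial identities in $\sigma$ (terms vanishing when the relevant $\xi(x)$ is too small). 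Substituting and using $\sigma(x)H(\sigma,\xi-\delta_x)=H(\sigma,\xi)$, $\sigma(y)H(\sigma,\xi-\delta_x)=H(\sigma,\xi-\delta_x+\delta_y)$, and $\sigma(x)^2H(\sigma,\xi-2\delta_x)=H(\sigma,\xi)$, the three drift/diffusion pieces become, respectively, the migration term $\sum_{x,y}\xi(x)m_{xy}[H(\sigma,\xi-\delta_x+\delta_y)-H(\sigma,\xi)]$, the double-branching term $\tfrac{s}{2}\sum_x\xi(x)[H(\sigma,\xi+2\delta_x)-H(\sigma,\xi)]$, and the annihilation term $\tfrac12\sum_x\xi(x)(\xi(x)-1)[H(\sigma,\xi-2\delta_x)-H(\sigma,\xi)]$. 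This is exactly $\mathcal L^\xi H(\sigma,\cdot)$, establishing the intertwining and explaining why the cubic selection drift and the degenerate noise $\sqrt{1-\sigma(x)^2}$ are precisely what produce branching and annihilation in the dual.

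With the intertwining in hand I would run the standard duality argument. Realizing $(\sigma_t)$ and $(\xi_t)$ as independent processes on one space, fix $T>0$ and set $\psi(t)=\E[H(\sigma_t,\xi_{T-t})]$ for $t\in[0,T]$. Differentiating and combining the two Kolmogorov equations with the intertwining yields $\psi'(t)=\E[L^\sigma H(\cdot,\xi_{T-t})(\sigma_t)]-\E[\mathcal L^\xi H(\sigma_t,\cdot)(\xi_{T-t})]=0$, so $\psi(0)=\psi(T)$; since $\psi(0)=\E_{\xi_0}[H(\sigma_0,\xi_T)]$ and $\psi(T)=\E_{\sigma_0}[H(\sigma_T,\xi_0)]$, this is precisely \eqref{eq:momdual}.

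The main obstacle is not the algebra but the analytic justification of this last step for an infinite system. Two points need care. First, the DBARW must be non-explosive, so that $\xi_t$ has finite support and $\mathcal L^\xi H$ is a convergent sum: this holds because $|\xi_t|=\sum_x\xi_t(x)$ increases only through double branching, at total rate $\tfrac{s}{2}|\xi_t|$ with two particles added per event, hence is dominated by a linear birth process and satisfies $\E|\xi_t|\le|\xi_0|\,\expp^{st}$. Second, the differentiation under the expectation and the use of It\^o's formula for the infinite-dimensional $\sigma$-field must be justified; one does this by noting that $H(\cdot,\xi)$ depends on only finitely many coordinates (the finite support of $\xi$), that the finite-range assumption on $m$ keeps the relevant coordinate set finite over $[0,T]$, and that $|H|\le1$ together with the moment bound on $|\xi_t|$ supplies the domination needed to differentiate. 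Once these are settled the conclusion follows; alternatively one may invoke a general duality theorem in the style of Ethier--Kurtz, for which the verified generator relation and the bound on $\E|\xi_t|$ are exactly the required hypotheses.
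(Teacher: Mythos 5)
Your proposal is correct and follows essentially the same route as the paper's proof: you verify the generator intertwining $\mathsf L^\sigma H(\cdot,\xi)(\sigma)=\mathsf L^\xi H(\sigma,\cdot)(\xi)$ for the duality function $H(\sigma,\xi)=\prod_{x\in\Z^d}\sigma(x)^{\xi(x)}$, with the cubic drift producing the double-branching term and the degenerate noise the annihilation term, exactly as in the paper's computation (your ladder-identity formulation merely avoids the cosmetic $\sigma(x)^{-1}$ factors the paper uses). The concluding step via $\psi(t)=\E[H(\sigma_t,\xi_{T-t})]$, together with the non-explosion bound $\E|\xi_t|\le|\xi_0|\expp^{st}$ and the finite-support/domination arguments, is precisely the Ethier--Kurtz machinery the paper invokes by citation, so you have simply made explicit what the paper delegates.
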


We will give the proof of Lemma~\ref{lem:dual} below in Section~\ref{sec:dual}.
As an application of the moment duality, we now show the equivalence of coexistence for the system \eqref{p:SDE} (for $\mu=2$) and survival of DBARW. See also \cite[Lemma 1]{SS} for a related result in the context of spin systems.

\begin{prop}\label{prop:equivalence}
Fix $s>0$. Let $(p_t)$ denote the solution to the system of SDEs in \eqref{p:SDE} for $\mu=2$, and let $(\xi_t)$ denote the DBARW with branching rate $s/2$ defined above. Then the following four statements are equivalent:
\begin{itemize}
\item[\rm (a)] For all initial conditions $p_0$ such that $p_0(x)\in(\varepsilon, 1-\varepsilon)$ for all $x$ for some small $\varepsilon>0$, we have long-term coexistence of $(p_t)$ with positive probability in the sense that there exists $\kappa\in (0,1)$ such that 
\[\liminf_{t\to\infty} \P\left(\kappa<p_t(0)<1-\kappa\right)>0.\]
\item[\rm (b)] There exists some initial condition $p_0$ for which we have long-term coexistence of $(p_t)$ with positive probability. 
\item[\rm (c)] 
The DBARW started with exactly two particles at the origin at time zero survives for all time with positive probability. That is, for the initial condition $\xi_0=2\1_{\{0\}}$, we have
\[\P\left(\xi_t\neq \mathbf 0,\;\forall\;t\geq 0\right)>0.\] 
\item[\rm (d)] 
The DBARW started with any even number of particles at time zero survives for all time with positive probability. 
\end{itemize}
\end{prop}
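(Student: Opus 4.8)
The plan is to establish the cyclic chain of implications (a) $\Rightarrow$ (b) $\Rightarrow$ (c) $\Rightarrow$ (d) $\Rightarrow$ (a), which makes all four statements equivalent. The engine throughout is the moment duality of Lemma~\ref{lem:dual} between the transformed process $\sigma_t=1-2p_t$ and the DBARW $(\xi_t)$ with branching rate $s/2$, together with the elementary observation that the DBARW conserves the parity of its total mass $|\xi_t|=\sum_{x}\xi_t(x)$: migration preserves $|\xi_t|$, whereas double branching and annihilation change it by $\pm 2$. Consequently an initial configuration with an even number of particles stays even for all time, so on the survival event one always has $|\xi_t|\ge 2$. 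The implication (a) $\Rightarrow$ (b) is immediate, since (a) produces a coexisting initial condition.

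For (b) $\Rightarrow$ (c) I would argue by contraposition. Assume the DBARW from $\xi_0=2\1_{\{0\}}$ dies out almost surely, so $\P(\xi_t=\mathbf 0)\to 1$. Evaluating \eqref{eq:momdual} at the dual starting point $\xi_0=2\1_{\{0\}}$ turns the left-hand side into the single second moment $\E_{\sigma_0}[\sigma_t(0)^2]$, while on the right-hand side the product $\prod_x\sigma_0(x)^{\xi_t(x)}$ equals $1$ on $\{\xi_t=\mathbf 0\}$ and has modulus at most $1$ elsewhere (finitely many factors, each $|\sigma_0(x)|\le1$). Hence $\E_{\sigma_0}[\sigma_t(0)^2]\ge\P(\xi_t=\mathbf 0)-\P(\xi_t\ne\mathbf 0)\to1$ for every $\sigma_0\in[-1,1]^{\Z^d}$, i.e.\ for every $p_0$. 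Using $1-\sigma_t(0)^2=4p_t(0)(1-p_t(0))$ gives $\E[p_t(0)(1-p_t(0))]\to0$, and since $p(1-p)\ge\kappa(1-\kappa)$ on $\{\kappa<p<1-\kappa\}$, this forces $\P(\kappa<p_t(0)<1-\kappa)\to0$ for every $\kappa$ and every $p_0$, so coexistence fails for all initial conditions, contradicting (b).

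For (d) $\Rightarrow$ (a) I would run the same computation in the opposite direction, again with $\xi_0=2\1_{\{0\}}$ (which is even, so (d) supplies survival probability $\theta:=\P_{2\1_{\{0\}}}(\text{survives})>0$). If $p_0(x)\in(\varepsilon,1-\varepsilon)$ for all $x$, then $c:=\sup_x|\sigma_0(x)|\le1-2\varepsilon<1$, and bounding the dual product above by its modulus $\prod_x|\sigma_0(x)|^{\xi_t(x)}\le c^{|\xi_t|}$ yields $\E_{\sigma_0}[\sigma_t(0)^2]\le\P(\xi_t=\mathbf 0)+c^2\P(\xi_t\ne\mathbf 0)\to1-(1-c^2)\theta<1$, where parity conservation guarantees $|\xi_t|\ge2$ on survival. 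A reverse Markov (Paley--Zygmund type) inequality then converts the bound $\limsup_t\E[\sigma_t(0)^2]\le1-\delta$ into a uniform positive lower bound on $\P(\sigma_t(0)^2<1-\delta/2)$, which is exactly $\liminf_t\P(\kappa<p_t(0)<1-\kappa)>0$ for a suitable $\kappa\in(0,1/2)$ depending only on $\varepsilon$ and $\theta$. This is (a).

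The remaining and, I expect, most delicate step is (c) $\Rightarrow$ (d): upgrading survival from two particles at the origin to survival from an arbitrary even configuration $\xi_0$ with $|\xi_0|=2n$. Because the DBARW is non-monotone (annihilation rules out a simple stochastic domination), I would instead use a local steering argument. Relying on irreducibility of the migration kernel $m$, with positive probability the process performs, in finite time, a prescribed finite sequence of migration and annihilation transitions that gathers particles into pairs, annihilates $n-1$ of them, and brings the surviving pair to the origin, thereby reaching the configuration $2\1_{\{0\}}$; each such transition has strictly positive rate and the particle count stays bounded by $2n$ throughout, so this event $E$ has positive probability. The strong Markov property at the hitting time of $2\1_{\{0\}}$ then gives $\P_{\xi_0}(\text{survives})\ge\P(E)\,\P_{2\1_{\{0\}}}(\text{survives})>0$ by (c), completing the cycle. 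The main difficulty is precisely in making this steering construction rigorous for the non-monotone dynamics; the analytic estimates in the other implications are routine once the duality and parity structure are in place.
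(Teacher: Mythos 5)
Your proposal is correct, and its analytic core is the same as the paper's: both proofs run the moment duality of Lemma~\ref{lem:dual} with the dual started at $\xi_0=2\1_{\{0\}}$, so that the left side of \eqref{eq:momdual} becomes $\E[\sigma_t(0)^2]$, and then split the right side on extinction versus survival of the DBARW. The differences are mostly presentational, with one substantive exception. Presentationally, where the paper proves (b)$\Rightarrow$(c) and (c)$\Rightarrow$(a) by contradiction, you argue contrapositively and directly, converting $\E[\sigma_t(0)^2]\to1$ into $\E[p_t(0)(1-p_t(0))]\to0$ plus Markov's inequality, and converting the uniform bound $\E[\sigma_t(0)^2]\le 1-(1-c^2)\theta$ into a quantitative lower bound on $\P(\kappa<p_t(0)<1-\kappa)$ via a reverse Markov estimate; this is equivalent in substance and arguably cleaner, since it makes $\kappa$ explicit in terms of $\varepsilon$ and the survival probability $\theta$. (Your parity observation that $|\xi_t|\ge2$ on survival, giving the factor $c^2$, is correct but not needed: a single surviving particle already yields the factor $1-2\varepsilon<1$, which is what the paper uses.) The substantive difference is (c)$\Leftrightarrow$(d): the paper dismisses this with one line (``clear from the double branching mechanism''), whereas you supply an actual argument---steer any even configuration down to $2\1_{\{0\}}$ by a prescribed finite sequence of migrations and annihilations occurring with positive probability, then apply the strong Markov property at the hitting time of $2\1_{\{0\}}$ together with (c). This is the right way to handle the non-monotone dynamics, and you correctly flag that stochastic domination is unavailable; do note, however, that your steering step silently requires enough irreducibility of the migration kernel $m$ (so that any two particles can be brought to a common site), which the paper's standing assumptions (homogeneous, finite range) do not literally guarantee---a hypothesis worth stating explicitly, though the paper itself glosses over this step entirely. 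Finally, your cyclic scheme (a)$\Rightarrow$(b)$\Rightarrow$(c)$\Rightarrow$(d)$\Rightarrow$(a) is logically sound, but observe that your (d)$\Rightarrow$(a) step only invokes survival from $2\1_{\{0\}}$, i.e.\ it is really the paper's (c)$\Rightarrow$(a) after the trivial instantiation (d)$\Rightarrow$(c).
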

\begin{proof}
The equivalence of (c) and (d) is clear from the double branching mechanism in (\ref{def:dbarw}), and it is plain that (a) implies (b). Below, we show that (b) implies (c) and (c) implies (a). 
We work with the transformed process $\sigma_t=1-2p_t$ taking values in $[-1,1]$, which satisfies \eqref{eq:transformed_process}. Note that coexistence for $(p_t)$ as in (a) and (b) is equivalent to the following condition on the process $(\sigma_t)$. 
For all initial conditions $\sigma_0$ such that $\sup_{x\in \Z^d}|\sigma_0(x)|<1-2\varepsilon$ for some small $\varepsilon>0$, there exists $\kappa>0$ such that 
\[\liminf_{t\to\infty}\P\left(|\sigma_t(0)|<1-2\kappa\right)>0.\]

The proof that (b) implies (c) is as follows. Suppose by way of contradiction that (c) is false. 
This means that if we start the DBARW with two particles at the origin at time zero, that is $\xi_0=2\1_{\{0\}}$, then the process will die out with probability one, that is 
there exists $t>0$ with $\xi_t=\mathbf 0$ almost surely.  
Consider 
any initial condition $p_0$ for which the process $(p_t)$ coexists. 
Then by the moment duality and dominated convergence, we have
\begin{linenomath*}\begin{align}\label{sigmat}
\E\left[\sigma_t(0)^2\right]=\E\left[\prod_{x\in\Z^d}\sigma_t(x)^{\xi_0(x)}\right]
=\E\left[\prod_{x\in \Z^d}\sigma_0(x)^{\xi_t(x)}\right]\xrightarrow[t\to\infty]{}1.
\end{align}\end{linenomath*}
On the other hand, by the assumed long-term coexistence in (b), we know that there is some $\kappa\in (0,1)$ such that 
\[\delta\stackrel{\rm def}{=}\liminf_{t\to\infty} \P\left(\kappa<p_t(0)<1-\kappa\right)=\liminf_{t\to\infty}\P\left(|\sigma_t(0)|<1-2\kappa\right)>0,
\] 
which implies
\begin{linenomath*}\begin{align*}
\E\left[\sigma_t(0)^2\right]=&\E\left[\sigma_t(0)^{2}\left(\1_{\{|\sigma_t(0)|<1-2\kappa)\}} + \1_{\{|\sigma_t(0)|\ge1-2\kappa)\}}\right)\right]\\
\le &(1-2\kappa)^2\,\P\left(|\sigma_t(0)|<1-2\kappa\right) + \P\left(|\sigma_t(0)|\ge1-2\kappa\right)
\end{align*}\end{linenomath*}
and thus
\[\liminf_{t\to\infty}\E\left[\sigma_t(0)^2\right]\le(1-2\kappa)^2\,\delta + (1-\delta)<1,\]
which is a contradiction to (\ref{sigmat}). We have proved that (b) implies (c). 

Next, we prove that (c) implies (a). Assume that (c) holds. Consider $\varepsilon>0$ and any `permissible' initial condition $\sigma_0$ in (a), that is $|\sigma_0(x)|\le1-2\varepsilon$ for all $x\in\Z^d$. Suppose by way of contradiction that for all $\kappa>0$ we have
\[\liminf_{t\to\infty}\P\left(|\sigma_t(0)|<1-2\kappa\right)=0.\]
Then for any $\kappa>0$ we get
\begin{linenomath*}\begin{align*}\E\left[\sigma_t(0)^2\right]&=\E\left[\sigma_t(0)^{2}\left(\1_{\{|\sigma_t(0)|<1-2\kappa)\}} + \1_{\{|\sigma_t(0)|\ge1-2\kappa)\}}\right)\right]\\
&\ge 0 + (1-2\kappa)^2\,\P\left(|\sigma_t(0)|\ge1-2\kappa\right)\\
&= (1-2\kappa)^2\,\big(1-\P\left(|\sigma_t(0)|<1-2\kappa\right)\big),
\end{align*}\end{linenomath*}
from which
we infer that
\[\limsup_{t\to\infty}\E\left[\sigma_t(0)^2\right]\ge (1-2\kappa)^2\,\left(1-\liminf_{t\to\infty}\P\left(|\sigma_t(0)|<1-2\kappa\right)\right)=(1-2\kappa)^2.\]
Since $\kappa>0$ was arbitrary, this implies
\[\limsup_{t\to\infty}\E\left[\sigma_{t}(0)^2\right]=1.\]
On the other hand, since the DBARW survives by (c), we have $\delta=\P\left(\xi_t\neq \mathbf 0,\;\forall\;t\geq 0\right)>0$. Then by the duality we have for all $t>0$ 
\begin{linenomath*}\begin{align*}
0\le &\E\left[\sigma_t(0)^2\right]=\E\left[\prod_{x\in \Z^d}\sigma_t(x)^{\xi_0(x)}\right]=\E\left[\prod_{x\in \Z^d}\sigma_0(x)^{\xi_t(x)}\right]\\
\le &\E\left[\prod_{x\in \Z^d}|\sigma_0(x)|^{\xi_t(x)}\right]\\
=&\E\left[\prod_{x\in \Z^d}|\sigma_0(x)|^{\xi_t(x)}\left(\1_{\{(\xi_t)\text{ survives}\}}+\1_{\{(\xi_t)\text{ dies out}\}}\right)\right]\\
\le&(1-2\varepsilon)\,\delta+ 1-\delta<1,
\end{align*}\end{linenomath*}
again giving a contradiction. We have proved that (c) implies (a). The proof is complete.
\end{proof}

By Proposition~\ref{prop:equivalence}, coexistence for just one initial condition $p_0$ implies coexistence for \emph{all} initial conditions of the form considered in (a).
Hence, to determine coexistence, it suffices to work with nice initial conditions such as  $p_0(x)=1/2$ for all $x\in\Z^d$, which we considered when discussing the case $s = 0$ above. Recall that using a comparison with oriented percolation, \cite{BEM} proves coexistence for sufficiently large values of $s$ and so, in view of Proposition~\ref{prop:equivalence}, they obtain as a corollary the survival of DBARW if the branching rate is large enough.

There is another range of parameters for which solutions to (\ref{p:SDE}) admit nice duals and so the question whether coexistence occurs can be settled  again under appropriate conditions. 

\begin{lem}\label{lem:BCRW} 
For $\mu\in[-1,0]$ and $s\le0$, the solution $(p_t)$ to \eqref{p:SDE} is dual to a {\bf branching coalescing random walk} (BCRW) $(\xi_t)$ taking values in $\Z_+^{\Z^d}$ with transition rates given by (\ref{def:crw}), and in addition, the following two: 
\begin{linenomath*}\begin{align*}
&\xi(x)\to \xi(x)+1\quad\mbox{ with rate $(-s)(\mu+1)\xi(x)$,} \\
&\xi(x)\to \xi(x)+2\quad \mbox{ with rate $(-s)(-\mu)\xi(x)$}.
\end{align*}\end{linenomath*}
\end{lem}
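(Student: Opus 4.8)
The plan is to prove the duality by the generator (Feynman--Kac) method sketched around (\ref{FK}), exactly as for the coalescing and double-branching cases in (\ref{eq:moment_duality_coal}) and Lemma~\ref{lem:dual}. I take as duality function
\[
H(p,\xi)=\prod_{x\in\Z^d}p(x)^{\xi(x)},\qquad p\in[0,1]^{\Z^d},\ \xi\in\Z_+^{\Z^d}\ \text{finitely supported,}
\]
and aim to verify the pointwise generator identity $\mathsf L^p H(\cdot,\xi)(p)=\mathsf L^\xi H(p,\cdot)(\xi)$, where $\mathsf L^p$ is the generator of the diffusion (\ref{p:SDE}) (with $N=1$) and $\mathsf L^\xi$ is the generator of the candidate BCRW. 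Once this identity is in hand, the integrated duality (\ref{eq:momdual})-type statement follows from the standard semigroup argument.

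First I would compute $\mathsf L^p H$. Using $\partial_{p(x)}H=\xi(x)p(x)^{-1}H$ and $\partial^2_{p(x)}H=\xi(x)(\xi(x)-1)p(x)^{-2}H$, the migration term $\sum_{x,y}m_{xy}(p(y)-p(x))\partial_{p(x)}H$ collapses to $\sum_{x,y}m_{xy}\xi(x)\bigl(H(p,\xi-\1_{\{x\}}+\1_{\{y\}})-H(p,\xi)\bigr)$, and the noise term $\tfrac12\sum_x p(x)(1-p(x))\partial^2_{p(x)}H$ collapses to $\tfrac12\sum_x\xi(x)(\xi(x)-1)\bigl(H(p,\xi-\1_{\{x\}})-H(p,\xi)\bigr)$. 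These reproduce verbatim the migration and coalescence parts of (\ref{def:crw}), exactly as in Shiga's stepping-stone duality; the coefficients $p(y)-p(x)$ and $p(x)(1-p(x))$ cancel the apparent negative powers of $p(x)$.

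The only new input is the selection drift. Here I would expand
\[
s\,p(x)(1-p(x))(1-\mu p(x))\,\partial_{p(x)}H=s\,\xi(x)\bigl[1-(1+\mu)p(x)+\mu p(x)^2\bigr]H(p,\xi),
\]
and then use $p(x)H(p,\xi)=H(p,\xi+\1_{\{x\}})$ and $p(x)^2H(p,\xi)=H(p,\xi+2\1_{\{x\}})$ to rewrite the summed selection term as
\[
(-s)(\mu+1)\sum_x\xi(x)\bigl(H(p,\xi+\1_{\{x\}})-H(p,\xi)\bigr)+(-s)(-\mu)\sum_x\xi(x)\bigl(H(p,\xi+2\1_{\{x\}})-H(p,\xi)\bigr),
\]
the constant contributions cancelling because $1-(1+\mu)+\mu=0$. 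This is precisely single branching at rate $(-s)(\mu+1)\xi(x)$ and double branching at rate $(-s)(-\mu)\xi(x)$, i.e.\ the two extra transitions in the statement. The sign hypotheses enter exactly here: $s\le0$ together with $\mu\in[-1,0]$ is what makes both $(-s)(\mu+1)$ and $(-s)(-\mu)$ nonnegative, so that $\mathsf L^\xi$ is a legitimate Markov generator. Summing the three pieces yields $\mathsf L^pH=\mathsf L^\xi H$ with $\mathsf L^\xi$ the stated BCRW generator.

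I expect the main obstacle to be the rigorous underpinning rather than the algebra. Two points need care. First, $H(\cdot,\xi)$ has apparent singularities at $p(x)=0$ through $p(x)^{\xi(x)-1}$ and $p(x)^{\xi(x)-2}$; the cancellations above show that after multiplication by the diffusion and drift coefficients the result is an honest polynomial in the coordinates, hence bounded and smooth on $[0,1]^{\Z^d}$, so $H(\cdot,\xi)$ lies in the domain of $\mathsf L^p$. Second, to convert the generator identity into (\ref{eq:momdual}) one differentiates $t\mapsto \E_{p_0}[H(p_t,\xi_0)]$ and $t\mapsto \E_{\xi_0}[H(p_0,\xi_t)]$ as in (\ref{FK}); the $p$-side is automatically controlled since $0\le H\le1$, while on the $\xi$-side one must exclude explosion of $|\xi_t|=\sum_x\xi_t(x)$. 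Since branching fires at total rate linear in $|\xi_t|$ and the finite-range, homogeneous matrix $m$ keeps migration well defined, $|\xi_t|$ is dominated by a pure-birth (Yule) process and stays finite for all $t$ almost surely, which suffices to run the argument. I would present these verifications in the same format as the proof of Lemma~\ref{lem:dual} in Section~\ref{sec:dual}, to which the computation is structurally identical.
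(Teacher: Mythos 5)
Your proposal is correct and takes essentially the same route as the paper: the paper's proof of Lemma~\ref{lem:BCRW} verifies the identical generator identity $\mathsf L^p H(\cdot,\xi)(p)=\mathsf L^\xi H(p,\cdot)(\xi)$ for $H(p,\xi)=\prod_{x\in\Z^d}p(x)^{\xi(x)}$, rewriting the selection drift $s\,\xi(x)\big(1-(\mu+1)p(x)+\mu p(x)^2\big)p^\xi$ as the single- and double-branching increments with rates $(-s)(\mu+1)\xi(x)$ and $(-s)(-\mu)\xi(x)$, ``by analogous calculations as in the proof of Lemma~\ref{lem:dual}.'' Your additional remarks on the domain of $\mathsf L^p$ and non-explosion of the dual are sound and in fact more explicit than what the paper records.
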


Clearly, BCRW survives for all time with probability one whenever the initial condition is not trivial. 
Note that Lemma~\ref{lem:BCRW} contains the coalescing random walk dual for the stepping stone model (that is the model with $s = 0$) as a special case. We will also sketch the proof of Lemma~\ref{lem:BCRW} in Section~\ref{sec:dual}.

\begin{prop}\label{prop:extinction}
Let $(p_t)$ denote the solution to (\ref{p:SDE}) with parameters $s<0$ and $\mu\in[-1,0]$, and let $(\xi_t)$ denote the BCRW with branching rates defined in Lemma~\ref{lem:BCRW}. 
Assume that for each (non-trivial) initial condition $\xi_0$ with $0<\sum_{x\in \Z^d}\xi_0(x)<\infty$, we have
\begin{align}\label{BCRW-drift-to-infty}
|\xi_t| = \sum_{x\in \Z^d}\xi_t(x)\xrightarrow[t\to\infty]{\rm a.s.}+\infty.
\end{align}
Then for each initial condition $p_0$ such that $p_0(x)<1-\varepsilon$ for all $x\in\Z^d$ and some $\varepsilon>0$, it holds that  
\[
p_t\xrightarrow[t\to\infty]{\rm a.s.}\mathbf 0.
\]
\end{prop}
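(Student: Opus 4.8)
The plan is to exploit the moment duality of Lemma~\ref{lem:BCRW} to show that every positive mixed moment of $p_t$ tends to $0$, which forces $\delta_{\mathbf 0}$ as the only possible limit law, and then to promote this to almost-sure convergence. Throughout I use the duality in the form $\E_{p_0}[\prod_x p_t(x)^{\xi_0(x)}]=\E_{\xi_0}[\prod_x p_0(x)^{\xi_t(x)}]$, which for $s=0$ is (\ref{eq:moment_duality_coal}) and for the present parameters $s<0$, $\mu\in[-1,0]$ is the content of Lemma~\ref{lem:BCRW}, with $(\xi_t)$ the BCRW.

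The core estimate is immediate. Fix any finite, non-trivial initial configuration $\xi_0\in\Z_+^{\Z^d}$. Since $0\le p_0(x)\le 1-\varepsilon$ for every $x$, the dual side of the duality is bounded by
\[
\E_{\xi_0}\Big[\prod_{x\in\Z^d}p_0(x)^{\xi_t(x)}\Big]\le \E_{\xi_0}\big[(1-\varepsilon)^{|\xi_t|}\big].
\]
By the hypothesis (\ref{BCRW-drift-to-infty}) we have $|\xi_t|\to+\infty$ almost surely, so $(1-\varepsilon)^{|\xi_t|}\to 0$ almost surely, and since these variables are bounded by $1$, dominated convergence yields $\E_{\xi_0}[(1-\varepsilon)^{|\xi_t|}]\to 0$. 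As the two sides of the duality are equal, this shows $\E[\prod_x p_t(x)^{\xi_0(x)}]\to 0$ for every non-trivial finite $\xi_0$. Taking $\xi_0=\1_{\{x\}}$ gives $\E[p_t(x)]\to 0$ for each $x$, hence $p_t(x)\to 0$ in $L^1$ and in probability; taking general $\xi_0$ shows that all positive mixed moments vanish, so that $p_t\to\mathbf 0$ in distribution on the compact space $[0,1]^{\Z^d}$.

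It remains to upgrade convergence in probability to almost-sure convergence, and this is the delicate point. Fixing a site $x$, since $\E[p_t(x)]\to 0$ I can select times $t_1<t_2<\cdots\to\infty$ with $\E[p_{t_n}(x)]\le 2^{-n}$; Markov's inequality and the Borel--Cantelli lemma then give $p_{t_n}(x)\to 0$ almost surely, and a countable union over $x$ handles all sites simultaneously. To conclude I must control the oscillation of $t\mapsto p_t(x)$ on each gap $[t_n,t_{n+1}]$: writing (\ref{p:SDE}) as $dp_t(x)=b_t(x)\,dt+\sqrt{p_t(x)(1-p_t(x))}\,dW_t(x)$ with $|b_t(x)|$ bounded uniformly (the drift is a bounded function of finitely many $[0,1]$-valued coordinates, by the finite-range assumption), one would bound $\sup_{t\in[t_n,t_{n+1}]}|p_t(x)-p_{t_n}(x)|$ using the bounded-variation part together with Doob's maximal inequality for the martingale part, whose bracket over $[t_n,t_{n+1}]$ is at most $\tfrac14(t_{n+1}-t_n)$.

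The main obstacle is precisely this oscillation control, because (\ref{BCRW-drift-to-infty}) provides no quantitative rate of growth for $|\xi_t|$, hence no rate for $\E[p_t(x)]\to 0$; the gaps $t_{n+1}-t_n$ may be forced to grow without bound, and over a long gap the martingale part can make an excursion of order one even when the endpoints are small. I would circumvent this in one of two ways. The first is to extract correlation decay from the duality: applying it with two particles, or with $\1_{\{x\}}$ started at two well-separated times, one controls quantities such as $\E[p_s(x)p_t(x)]$ and rules out the persistent excursions needed for $\limsup_t p_t(x)>0$, combining this with an occupation-time estimate. The second, if (\ref{p:SDE}) is attractive for these parameters (consistent with the monotone structure of the branching-coalescing dual), is to dominate $(p_t)$ by the solution started from the spatially homogeneous maximal datum $(1-\varepsilon)\mathbf 1$ and to analyse the latter through spatial ergodicity. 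The duality computation of the first two paragraphs is routine; the entire difficulty of the proposition is concentrated in making one of these almost-sure arguments rigorous.
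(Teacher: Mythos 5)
Your first two paragraphs reproduce, essentially line for line, the paper's entire proof of Proposition~\ref{prop:extinction}: by Lemma~\ref{lem:BCRW},
\begin{align*}
\E\left[\prod_{x\in \Z^d}p_t(x)^{\xi_0(x)}\right]=\E\left[\prod_{x\in \Z^d}p_0(x)^{\xi_t(x)}\right]\le\E\left[(1-\varepsilon)^{|\xi_t|}\right]\xrightarrow[t\to\infty]{}0
\end{align*}
by \eqref{BCRW-drift-to-infty} and dominated convergence, after which the paper concludes in a single phrase that the vanishing of all mixed moments ``is enough for our assertion.'' So your core estimate is correct and is exactly the paper's argument; there is no step in the paper that you are missing.

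Where you diverge is in refusing to take that closing phrase at face value for the \emph{almost-sure} statement, and your scruple is legitimate: vanishing mixed moments gives convergence in law to $\delta_{\mathbf 0}$ on the compact space $[0,1]^{\Z^d}$, hence coordinatewise convergence in probability and in $L^1$, but the paper supplies no rate for $\E\big[(1-\varepsilon)^{|\xi_t|}\big]$ (hypothesis \eqref{BCRW-drift-to-infty} is purely qualitative), no oscillation control for $t\mapsto p_t(x)$, and no monotonicity or supermartingale device to bridge from convergence in probability to a.s.\ convergence. Your Borel--Cantelli skeleton and your identification of the obstruction --- unbounded gaps $t_{n+1}-t_n$ over which the martingale part of \eqref{p:SDE} can make order-one excursions --- are accurate, and neither of your two proposed repairs (two-particle duality to control $\E[p_s(x)p_t(x)]$, or attractiveness plus a homogeneous dominating solution) appears in the paper. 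In short: your proposal proves exactly as much as the paper proves, namely that all mixed moments vanish and hence that the types cannot coexist in any distributional sense; the remaining upgrade that you flag as the ``entire difficulty'' is precisely the step the paper elides, so rather than a gap on your side you have pinpointed a genuine looseness in the paper's own proof of the almost-sure claim as literally stated.
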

\begin{proof} 
By Lemma~\ref{lem:BCRW}, we have 
\begin{linenomath*}\begin{align*}
\E\left[\prod_{x\in \Z^d}p_t(x)^{\xi_0(x)}\right]&=\E\left[\prod_{x\in \Z^d}p_0(x)^{\xi_t(x)}\right]
\le\E\left[(1-\varepsilon)^{|\xi_t|}\right]\xrightarrow[t\to\infty]{}0
\end{align*}\end{linenomath*}
by dominated convergence.
This shows that all mixed moments of $(p_t)$ converge to zero, which is enough for our assertion. 
\end{proof}

Of course, this result is not surprising given that under the choice of the parameters $\mu$ and $s$ in Proposition \ref{prop:extinction}, type $0$ has a selective disadvantage. 
We note that condition \eqref{BCRW-drift-to-infty} is typically satisfied, see \cite[Thm. 4.2]{SU}. 
Since $(1-p_t)$ satisfies \eqref{p:SDE} with the same migration matrix $m$ and $s$ resp. $\mu$ replaced by $(-s)(1-\mu)$ resp. $\frac{\mu}{\mu-1}$, 
we may conclude that $p_t\xrightarrow[t\to\infty]{\rm a.s.}\mathbf 1$ for the parameter regime $s>0$ and $\mu\in[0,\frac{1}{2}]$.

\subsection{Proof of the moment duality}\label{sec:dual}
In this subsection, we give the proofs of the two duality results in Lemmas \ref{lem:dual} and \ref{lem:BCRW}.\\

\begin{proof}[Proof of Lemma \ref{lem:dual}]
Let $\mathsf L^\xi$ denote the generator of the DBARW $(\xi_t)$ from Definition \ref{def:DBARW} and $\mathsf L^\sigma$ denote the generator of $(\sigma_t)$ from \eqref{eq:transformed_process}. The bivariate duality function in use is now given by
\[
H(\sigma,\xi)=\sigma^\xi\stackrel{\rm def}{=}\prod_{x\in \Z^d}\sigma(x)^{\xi(x)}. 
\]
Then  (\ref{eq:transformed_process}) implies that 
\begin{linenomath*}\begin{align}
\mathsf L^\sigma H(\cdot,\xi)(\sigma)&=\sum_{x\in \Z^d}\left(\sum_{y\in \Z^d}m_{xy}\big(\sigma(y)-\sigma(x)\big) \right)\frac{\partial}{\partial \sigma(x)}\sigma^\xi\notag\\
&\quad+ \frac{s}{2} \sum_{x\in \Z^d}\left(\sigma(x)^3-\sigma(x)\right)\frac{\partial}{\partial \sigma(x)}\sigma^\xi\notag\\
&\quad + \frac{1}{2}\sum_{x\in \Z^d} \left(1-\sigma(x)^2\right)\frac{\partial^2}{\partial \sigma(x)^2}\sigma^\xi
\notag\\
\begin{split}
&=\sum_{x\in \Z^d:\xi(x)\geq 1}\left(\sum_{y\in \Z^d}m_{xy}\big(\sigma(y)-\sigma(x)\big) \right) \xi(x)\sigma(x)^{\xi(x)-1}\prod_{y\neq x}\sigma(y)^{\xi(y)} \notag\\
&\quad+ \frac{s}{2} \sum_{x\in \Z^d:\xi(x)\geq 1}\left(\sigma(x)^3-\sigma(x)\right) \xi(x)\sigma(x)^{\xi(x)-1}\prod_{y\neq x}\sigma(y)^{\xi(y)}\notag\\
&\quad+\frac{1}{2}\sum_{x\in \Z^d:\xi(x)\geq 2} \left(1-\sigma(x)^2\right)\xi(x)(\xi(x)-1) \sigma(x)^{\xi(x)-2}\prod_{y\neq x}\sigma(y)^{\xi(y)}
\end{split}
\notag\\
\begin{split}\label{Lsigma}
&=\sum_{x\in \Z^d:\xi(x)\geq 1} \xi(x)\left(\sum_{y\in \Z^d}m_{xy}\big(\sigma(y)\sigma(x)^{-1}-1\big) \right)\sigma^\xi \\
&\quad+ \frac{s}{2} \sum_{x\in \Z^d:\xi(x)\geq 1}\xi(x)\left(\sigma(x)^{2}-1\right) \sigma^\xi\\
&\quad+\frac{1}{2}\sum_{x\in \Z^d:\xi(x)\geq 2}\xi(x)(\xi(x)-1)\left( \sigma(x)^{-2} - 1\right) \sigma^\xi.
\end{split}
\end{align}\end{linenomath*}
On the other hand, the definition of $(\xi_t)$ shows that 
\begin{linenomath*}\begin{align*}
\mathsf L^\xi H(\sigma,\cdot)(\xi)&= \sum_{x\in \Z^d}\sum_{y\in \Z^d}\xi(x)m_{xy}\big(H(\sigma,\xi-\delta_x+\delta_y)-H(\sigma,\xi)\big)\\
&\quad+\frac{s}{2}\sum_{x\in \Z^d}\xi(x) \big(H(\sigma,\xi+2\delta_x)-H(\sigma,\xi)\big)
\\
&\quad+\frac{1}{2}\sum_{x\in \Z^d} \xi(x)\big(\xi(x)-1\big)\big(H(\sigma,\xi-2\delta_x)-H(\sigma,\xi)\big)\\
&= \sum_{x\in \Z^d:\xi(x)\geq 1}\sum_{y\in \Z^d}\xi(x)m_{xy}\big(\sigma(y)\sigma(x)^{-1}-1\big)\sigma^\xi\\
&\quad+\frac{s}{2}\sum_{x\in \Z^d:\xi(x)\geq 1}\xi(x) \big(\sigma(x)^2-1\big)\sigma^\xi\\
&\quad+\frac{1}{2}\sum_{x\in \Z^d:\xi(x)\geq 2} \xi(x)\big(\xi(x)-1\big)\big(\sigma(x)^{-2}-1\big) \sigma^\xi,
\end{align*}\end{linenomath*}
which is equal to the right-hand side of (\ref{Lsigma}). 
Hence, by the arguments in \cite[pages~188--193]{EK:MP} (see also \cite[Prop. 1.2]{JK14}), the required moment duality (\ref{eq:momdual}) holds. 
\end{proof}

\begin{proof}[Proof of Lemma \ref{lem:BCRW}]
In this proof, $\mathsf L^\xi$ denotes the generator of the BCRW $(\xi_t)$ with transition rates as defined in the statement of Lemma~\ref{lem:BCRW} and $\mathsf L^p$ denotes the generator of $(p_t)$ from \eqref{p:SDE}. 
The duality function is now changed to 
\[
H(p,\xi)=p^\xi\stackrel{\rm def}{=}\prod_{x\in \Z^d}p(x)^{\xi(x)}.
\] 
 Then by analogous calculations as in the proof of Lemma \ref{lem:dual}, we see that

\newpage
 \begin{align*}
 \mathsf L^p H(\cdot,\xi)(p) &= \sum_{x\in \Z^d:\xi(x)\geq 1}\sum_{y\in \Z^d}\xi(x)m_{xy}\big(p(y)p(x)^{-1}-1\big)p^\xi\\
 &\quad+s\sum_{x\in \Z^d:\xi(x)\ge1}\xi(x) \big(1-(\mu+1)p(x)+\mu p(x)^2\big)p^\xi \\
 &\quad+\frac{1}{2}\sum_{x\in \Z^d:\xi(x)\geq 2} \xi(x)\big(\xi(x)-1\big)\big(p(x)^{-1}-1\big) p^\xi\\
 &= \sum_{x\in \Z^d:\xi(x)\geq 1}\sum_{y\in \Z^d}\xi(x)m_{xy}\big(p(y)p(x)^{-1}-1\big)p^\xi\\
 &\quad+(-s)(-\mu)\sum_{x\in \Z^d}\xi(x) \big(p(x)^2-1\big)p^\xi+(-s)(\mu+1)\sum_{x\in \Z^d}\xi(x) \big(p(x)-1)\big)p^\xi\\
 &\quad+\frac{1}{2}\sum_{x\in \Z^d:\xi(x)\geq 2} \xi(x)\big(\xi(x)-1\big)\big(p(x)^{-1}-1\big) p^\xi\\
 &=\mathsf L^\xi H(p,\cdot)(\xi).
 \end{align*}
\end{proof}

\end{document}